\documentclass[10pt]{article}
\usepackage[a4paper, margin=1in]{geometry} 

\usepackage{amsmath, amssymb, amsthm}
\usepackage{graphicx}   
\usepackage{hyperref}   
\usepackage{authblk}    
\usepackage{cite}       

\usepackage{subcaption}
\usepackage{comment}
\usepackage{overpic}
\usepackage{amsmath}
\usepackage{amssymb}
\usepackage{xcolor}

\title{On the Rigid-Ruling Folding of Curved Creases: \\
Conjugate-Net Preserving Isometric Deformations of Semi-Discrete Globally Developable Conjugate-Nets}
\author{Klara Mundilova}
\affil{EPFL, Switzerland}
\date{} 

\newcommand{\vecdfn}[1]{\mathbf{#1}} 

\newcommand{\vecn}{{\vecdfn{n}}}

\newcommand{\vecp}{{\vecdfn{p}}}

\newcommand{\vecr}{{\vecdfn{r}}}
\newcommand{\vecs}{{\vecdfn{s}}}
\newcommand{\vect}{{\vecdfn{t}}}

\newcommand{\vecv}{{\vecdfn{v}}}

\newcommand{\vecx}{{\vecdfn{x}}}

\newcommand{\vecB}{{\vecdfn{B}}}

\newcommand{\vecN}{{\vecdfn{N}}}

\newcommand{\vecP}{{\vecdfn{P}}}

\newcommand{\vecR}{{\vecdfn{R}}}
\newcommand{\vecS}{{\vecdfn{S}}}
\newcommand{\vecT}{{\vecdfn{T}}}

\newcommand{\vecX}{{\vecdfn{X}}}

\newcommand{\R}{\mathbb{R}}
\newcommand{\const}{\operatorname{const}}

\newtheorem{lemma}{Lemma}
\newtheorem{corollary}{Corollary}
\newtheorem{theorem}{Theorem}
\newtheorem{definition}{Definition}

\begin{document}

\maketitle

\begin{abstract} 
In this paper, we investigate rigid-ruling folding motions of crease-rule patterns, that is, conjugacy-preserving isometries of developable semi-discrete conjugate nets. We derive two conditions for the rigid-ruling foldability of pairs of curves and consider two applications. First, we introduce computations that enable the sequential construction of rigid-ruling foldable crease-rule patterns. Second, we examine combinations of planar and constant fold-angle creases. In particular, we show that constant fold-angle creases are only compatible with other constant fold-angle creases, and we provide a characterization of rigid-ruling foldable combinations of planar and constant fold-angle creases.
\end{abstract}

\section{Introduction}\label{sec:intro}

Structures made from sheet materials offer advantages, such as cost-effective fabrication while enabling the creation of intricate forms by joining sheets along curved boundaries. Beyond their artistic appeal, such structures have found applications in architectural shells~\cite{pottmann2008}, innovative furniture design~\cite{wills2006d}, and hydrodynamic ship hulls~\cite{PEREZARRIBAS2006584}.
A notable subclass, \emph{curved-crease origami}, arises when sheet material is folded along curves. Initially explored in artistic contexts~\cite{koschitz2014computational, demaine2015review}, curved-crease origami shapes have since evolved into an active field of both theoretical investigation~\cite{demaine2015characterization} and applied research of transformable structures~\cite{tachi2013composite}.

Mathematically, shapes obtained by bending a sheet of paper are (parts of) developable surfaces. As a subclass of ruled surfaces, developable surfaces are composed of families of straight lines, called \emph{rulings}, and are characterized by a constant tangent plane along each ruling~\cite{linegeometry}. 
Compositions of developable patches joined along curved boundaries constitute semi-discrete structures that can approximate curved geometries, see for example~\cite{stein2018} and references therein. Curved-crease origami comprises a notable subclass, in which the common boundary curve, the \emph{curved crease}, has the (geometric) property that, when unrolled into the plane with respect to either incident patch, it coincides, while the two patches lie on opposite sides of the developed curve.

In recent years, compositions of developable patches have been studied both in the theory of curved-crease origami and in semi-discrete differential geometry, leading to two terminologies for same concepts. In the following, we provide a brief overview of these perspectives.

\paragraph{Curved-crease origami.} In the study of curved-crease origami, a fundamental research question is whether a given folded shape “exists” mathematically, that is, whether it can be described using developable patches~\cite{demaine2011non}. This question is nontrivial, since determining the rulings is itself challenging. One approach is to start from the 2D configuration prescribed by creases and rulings, and ask whether the given \emph{crease-rule pattern} admits a \emph{folded state}, that is, a semi-discrete structure composed of developable patches that is isometric to the 2D configuration with the same rulings (and not planar).

In general, given two 2D developable patches with prescribed rulings, joining them along a smooth boundary curve typically results in a one-parameter family of corresponding 3D configurations~\cite{mundilovaphd}. A smooth variation between these configurations can be interpreted as a folding motion that preserves the rulings. In contrast, joining three patches with prescribed rulings is usually overconstrained, and a 3D configuration may not exist. Nevertheless, there are examples in which three or more ruled 2D patches admit a continuous family of 3D states, allowing the angles between the patches to vary smoothly. We refer to this type of motion as a \emph{rigid-ruling (folding) motion}. 
Analogous considerations apply to the special case of curved-crease origami, where crease-rule patterns involving two or more creases admit a rigid-ruling folding motion only in special cases (see Figure~\ref{fig:introa}).

\paragraph{Semi-discrete differential geometry.} Smooth conjugate nets are surface parametrizations characterized by the property that their mixed partial derivatives are tangent to the surface (or, in some cases, vanish). Intuitively, such parametrizations satisfy the following geometric property: the rulings of the envelope of tangent planes along one family of parameter curves are tangent to the corresponding curves of the incident second family. The discrete counterparts of conjugate nets are regular planar quad (PQ) meshes, where the edges of the polylines correspond to the conjugate directions~\cite[§27]{peterson1868kurven} (see Figure~\ref{fig:introb}). One-directional refinement of PQ meshes yields compositions of developable strips, see for example~\cite[§1.3]{muller2015semi}, which correspond to semi-discrete conjugate nets, with ruling polylines and common boundary curves representing the conjugate families. Curved-crease origami shapes form a special subclass of semi-discrete conjugate nets, namely those that are \emph{globally} developable.

Classical differential geometry investigates isometric deformations of smooth surfaces, with those that preserve conjugate nets forming a special subclass. 
As early as 1860, Bianchi formulated a necessary and sufficient condition for a smooth surface to admit an isometric deformation that maps conjugate nets to conjugate nets~\cite[§141]{eisenhart2013treatise}. 

In the discrete setting, isometric deformations of conjugate nets correspond to \emph{flexible} (or \emph{rigidly-foldable}) PQ meshes, that is, meshes that can be continuously transformed by a change of the dihedral angles only. Building on previous work, such as~\cite{stachel2010kinematic}, only recently have all $3 \times 3$ flexible quad meshes been fully classified by Izmestiev~\cite{izmestiev}. While a quad mesh is flexible if and only if all of its $3 \times 3$ submeshes are flexible~\cite[Theorem 3.2]{schief2008integrability}, assembling such submeshes into larger flexible quad meshes is nontrivial, and to date only special cases have been investigated. A complete characterization of all flexible quad meshes remains an open problem.

\begin{figure}[t]

\begin{subfigure}[b]{0.49\textwidth}
\centering
\includegraphics[width=\textwidth]{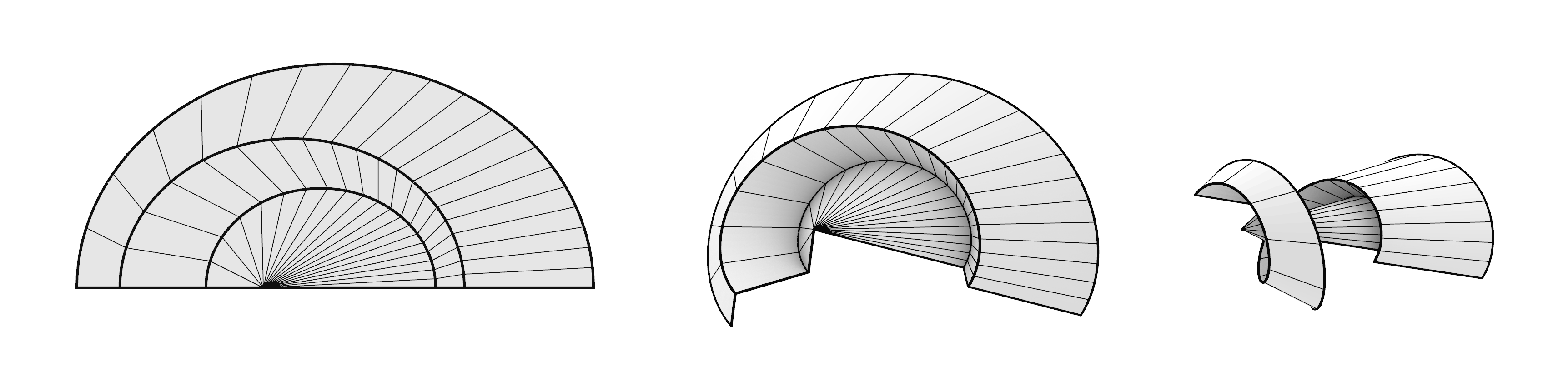}
    \caption{Rigid-ruling folding motion of a crease-rule pattern.}\label{fig:introa}
\end{subfigure}
\begin{subfigure}[b]{0.49\textwidth}
\centering
\includegraphics[width=\textwidth, trim = -3cm 0cm 0cm 0cm, clip]{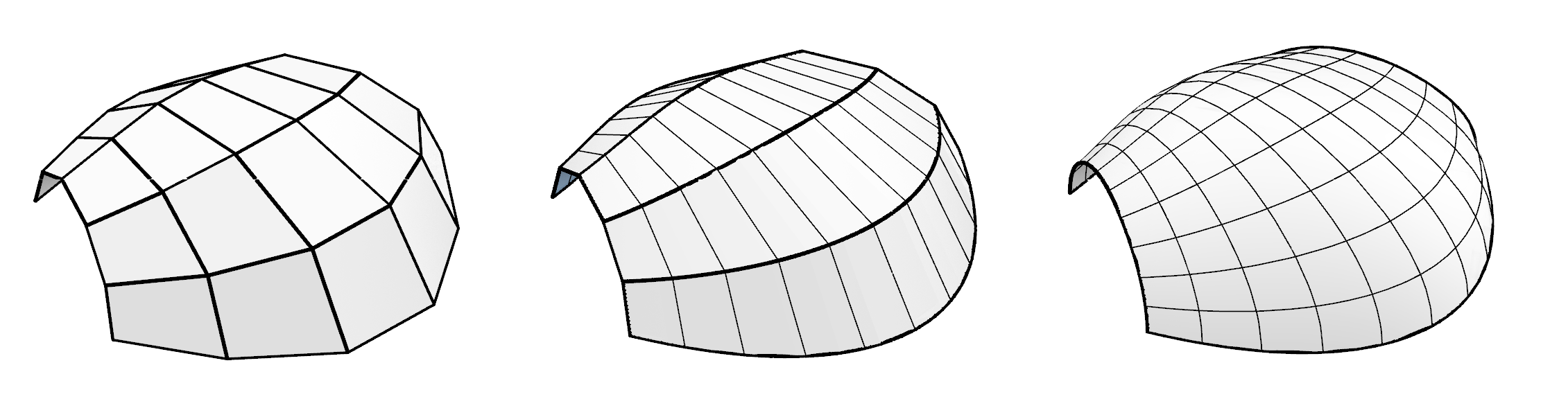}
    \caption{Discrete, semi-discrete, and smooth conjugate nets.}\label{fig:introb}
\end{subfigure}

    \caption{Illustration of concepts discussed in Section~\ref{sec:intro}.}
\end{figure}

\paragraph{} A folding motion of a combination of developable patches that preserves the rulings is an isometry that also preserves conjugate nets. Consequently, rigid-ruling folding motions of compositions of developable patches correspond to conjugate-net-preserving isometries of semi-discrete conjugate nets. 
This raises the question of which combinations of curves and rulings are compatible, and whether these semi-discrete configurations admit smooth or fully discrete counterparts. To address this, we review relevant results on subclasses of conjugate nets:
\begin{itemize}
\item \emph{T-nets:} A prominent class of conjugate nets are those whose conjugate families lie in mutually perpendicular planes, known as T-nets. Their discrete counterparts, called \emph{T-hedra}, are characterized by trapezoidal faces, which give this class its name. Discrete T-nets were first observed by Graf and Sauer~\cite{sauer1931flachenverbiegung} to be flexible, and recent work has provided smooth analogs as well as synthetic descriptions of the smooth and semi-discrete cases~\cite{izmestiev2024isometric}. Recently, T-hedra have been applied in the design of tubular structures~\cite{sharifmoghaddam2023generalizing}.

In the context of origami, creases that remain planar during the folding motion form a special class and can be generated by reflection. Nawratil~\cite{nawratil2023isometrically} characterizes all pairs of polylines on discrete cylinders and cones that remain planar under deformation. A consequence of this study is, that not all rigidly foldable discrete planar crease patterns correspond to T-hedra. In contrast, in the semi-discrete case, Mundilova and Nawratil~\cite{mundilova2024planar} show that every combination of planar creases admitting a rigid-ruling folding motion are semi-discrete T-nets.

\item \emph{V-nets:} Another prominent subclass of conjugate nets that are simultaneously geodesic nets was first studied by Voss in 1889 and is now known as V-nets~\cite{voss1889ueber}. In the discrete setting, the geodesic condition translates into the equality of opposite sector angles at each vertex~\cite[Sec.~12.2]{sauer1970differenzengeometrie}. Discrete V-nets are closely related to anti-V-nets, which are also studied in the context of flat-foldable origami with valence four vertices. Due to their applications in transformable structures, both discrete V-nets and anti-V-nets have been the subject of research, resulting in several design approaches; see for example~\cite{kilian2024interactive} and the references therein.

A sequence of flat-foldable developable vertices connected along their major crease maintains a constant dihedral angle. Consequently, in the semi-discrete setting, anti-V-nets correspond to constant fold-angle creases. Inspired by David Huffman's artistic exploration of conical curved-creases, Demaine et al.~\cite{demaine2018conic} characterize all constant-fold angle creases between cylinders and cones. In subsequent work, Demaine et al.~\cite{demaine2023locally} show that such patterns admit two types of discretization that are rigidly foldable and highlight conditions under which these discretizations are even flat-foldable. A publication on smooth, discrete, and semi-discrete V-nets is currently being prepared by Izmestiev et al.~\cite{izmestiev2025}.

\item \emph{Cone-nets:} The discrete conic-crease patterns are examples of discrete cone-nets, specifically, PQ meshes composed of strips of cylinders or cones. Moreover, they represent a subclass of \emph{axial} cone-nets, as their cone apices (or points at infinity in the case of cylinders) remain collinear throughout the folding motion. Recently, building on this work, Nawratil~\cite{nawratil2024axial} characterizes all flexible axial discrete cone-nets.
\end{itemize}

\paragraph{Contributions.} In this paper, we study the rigid-ruling foldability of \emph{regular} crease-rule patterns, that is, crease patterns consisting of a sequence of patches separated by creases. Equivalently, this amounts to investigating conjugate-net-preserving isometries of globally developable semi-discrete conjugate nets. After reviewing our notation and introducing preliminary results in Section~\ref{sec:primer}, the contributions of this paper are as follows:
\begin{itemize} 
\item[(1)] In Section~\ref{sec:mainresult}, we identify conditions under which a given crease-rule pattern with two creases admits a rigid-ruling folding motion.

\item[(2)] In Section~\ref{sec:appending}, we show how to successively construct rigid-ruling foldable crease patterns by adding curved creases and patches in a way that preserves the rigid-ruling folding motion.

\item [(3)] Finally, in Section~\ref{sec:characterization}, we analyze the rigid-ruling folding compatibility of two special classes of creases: planar creases and constant fold-angle creases.
\end{itemize}

With this work, our goal is to make a step towards understanding conjugate-net preserving isometries of semi-discrete conjugate-nets.

\section{Preliminaries}\label{sec:primer}

For the sake of self-containedness, we include the notation used in our analysis of compositions of developable surfaces, as presented in~\cite{mundilovaphd, mundilova2024planar}, which builds on the work of~\cite{demaine2015characterization, demaine2018conic}. For further background on ruled and developable surfaces, we refer the reader to~\cite{linegeometry}.

\subsection{Developable surfaces}

In this paper, we parametrize developable surfaces
as ruled surfaces while imposing an additional constraint to ensure developability.

\subsubsection{Ruled surfaces}

Recall that a \emph{ruled surface} can be parametrized as
\begin{equation}\label{eqn:ruledpatch}
\vecS(t,u) = \vecX(t) + u \vecR(t), 
\end{equation}
where $\vecX(t): T \to \mathbb{R}^3$ is a curve, the \emph{directrix}, and $\vecR(t) : T \to S^2$ are unit-length vectors, the so-called \emph{ruling directions}. Without loss of generality, we assume $T = [0, t_{\max}]$, for some $t_{\max} > 0$, and $u \in \R$. Additionally, for the surface to be $C^2$, we require both $\vecX(t)$ and $\vecR(t)$ to be $C^2$.

We assume that the curve is equipped with an orthonormal frame and we describe the location of the ruling vectors with respect to this frame. To ensure that the curve's frame is continuous, we specify the curve $\vecX(t)$ through three functions: $K(t) : T \to \R$, 
   $\tau(t) : T \to \R$, and $s(t) : T \to \R$. These functions define $\vecX(t)$ up to Euclidean motion through the Frenet-Serret formulas, that is, $\vecX'(t) = s'(t) \vecT(t)$, where  \begin{equation}\label{eqn:frenetserret}
      \begin{pmatrix}
      \vecT'(t) \\
      \vecN'(t) \\
      \vecB'(t)
      \end{pmatrix}
      = s'(t) \begin{pmatrix}
          0 & K(t) & 0 \\
          -K(t) & 0 & \tau(t) \\
          0 & -\tau(t) & 0 
      \end{pmatrix}
      \begin{pmatrix}
      \vecT(t) \\
      \vecN(t) \\
      \vecB(t)
      \end{pmatrix},
      \end{equation}
      see \cite{nutbourne1988differential}.
Here we require that 
      the $K(t)$, $\tau(t)$, and $s'(t)$ are continuous, and $s'(t) > 0$. 

Note that we allow $K(t)$ to take negative values. Moreover, isolated points or intervals where $K(t) = 0$ or $\tau(t)= 0$ still yield a continuous frame $(\vecT(t), \vecN(t), \vecB(t))$. The described frame is not a Frenet-frame. However, at parameter values where the Frenet-frame is defined, the computed frame coincides with the Frenet-frame, differing only by sign. Moreover, $K(t)$ corresponds to the curvature of the directrix up to sign, while $\tau(t)$ is the torsion of the directrix when defined. In the following, we refer to $K(t)$ as the \emph{(signed) curvature} and to $\tau(t)$ as the \emph{torsion}. Furthermore, $s(t)$ denotes the \emph{arc-length} of the directrix, and $s'(t)$ represents the \emph{parametrization speed}.
        
To determine the ruling directions with respect to the frame $\left(\vecT(t), \vecN(t), \vecB(t)\right)$, we introduce two additional angular functions: the \emph{inclination angle} $\varphi(t) : T \to \R$ and the \emph{ruling angle} $\theta(t): T \to \R$; see Figure~\ref{fig:ill_not_patch}.
  
The inclination angle $\varphi(t)$ encodes the angle between a one-parameter family of planes $\Pi(t)$, which contain the curve's tangent vectors $\vecT(t)$. Those planes will correspond to the tangent planes if the ruled surface is developable. 
We express the normal vector $\vecP(t)$ of $\Pi(t)$ as 
\begin{align}
      \vecP(t) = \cos \varphi(t) ~\vecB(t) +  \sin \varphi(t) ~ \vecN(t),\label{eqn:tanplanenormal}
 \end{align}  
 resulting in $\varphi(t)$ being the signed angle between $\vecP(t)$ and $\vecB(t)$.

  Within the plane $\Pi(t)$, we locate the ruling direction using
  the ruling angle as 
  \begin{align}
      \vecR(t) &= \cos \theta(t) ~\vecT(t) + \sin \theta(t) \left(\vecP(t) \times \vecT(t)\right) \nonumber \\
      &= \cos \theta(t) ~\vecT(t) + \sin \theta(t) \left(\cos \varphi(t) ~ \vecN(t) - \sin\varphi(t) ~\vecB(t)\right).\label{eqn:Rexpl}
  \end{align} 
 For the ruling direction to be $C^2$, we require both $\theta(t)$ and $\varphi(t)$ to be $C^2$. Note that for the subsequent computations, $C^1$ would be sufficient. 
 
 \begin{figure}
 \centering
      \begin{scriptsize}%
\begin{overpic}[height=0.33\textwidth,trim = 2cm 0.1cm 0.7cm 1.1cm, clip]{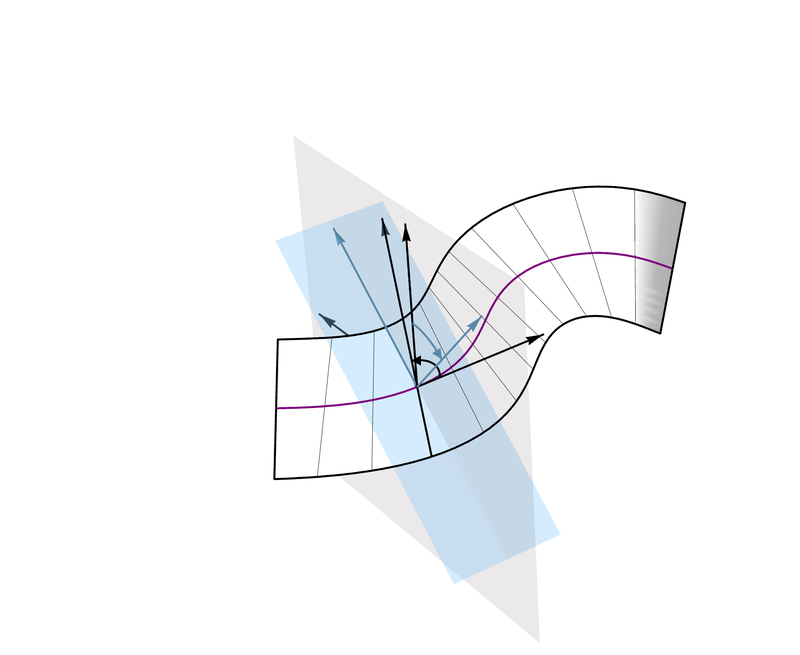}%
\put(60,91){$\vecS(t,u)$}
\put(-5,44){$\vecX(t)$}
\put(61,55){$\vecT(t)$}
\put(4,64){$\vecN(t)$}
\put(33,83){$\vecB(t)$}
\put(49,65){$\vecP(t)$}
\put(38,46){$\theta(t\!)$}
\put(35,62){$\varphi(t\!)$}
\put(21,84){$\vecR(t\!)$}
\put(63,17){$\Pi(t)$}
\end{overpic}%
\begin{overpic}[height=0.33\textwidth,trim = -0cm -0.cm -0.4cm -1cm, clip]{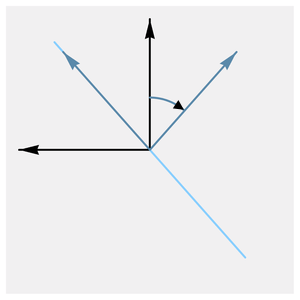}
\put(3,62){$\vecP(t)\!\times\!\vecT(t)$}
\put(3,39){$\vecN(t)$}
\put(37,64){$\vecB(t)$}
\put(57,60){$\vecP(t)$}
\put(37,50){$\varphi(t\!)$}
\put(57,4){$\Pi(t)$}
\end{overpic}%
\begin{overpic}[height=0.33\textwidth,trim = -0.4cm 0.2cm 0.cm -0.2cm, clip]{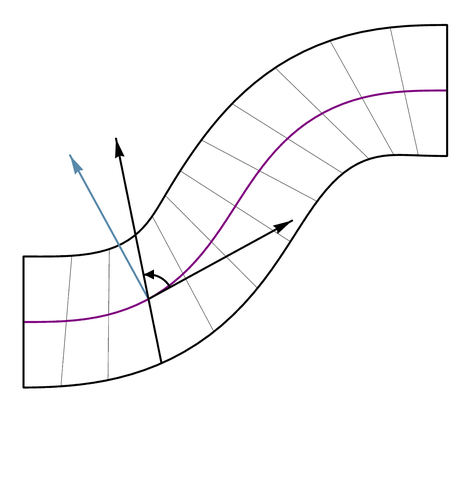}%
  \put(3,28){$\vecx(t)$}
  \put(81,90){$\vecs(t,u)$}
  \put(60,51){$\vect(t)$}
  \put(37,41){$\theta(t)$}
  \put(17,64){$\vecn(t)$}
  \put(28,67){$\vecr(t)$}
\end{overpic}%
    \end{scriptsize}%
     \caption{Illustration of the notation of a developable patch and its development (as in~\cite{mundilovaphd, mundilova2024planar}).}\label{fig:ill_not_patch}
      \end{figure}

\subsubsection{Developability condition}\label{sec:development}

  It is known that the ruled surface in Equation~\eqref{eqn:ruledpatch} is developable if for all rulings, the tangent planes along points on a ruling are the same~\cite{linegeometry}. This condition can be expressed as 
\begin{equation}\label{eqn:devgen}
  \det\left(\vecX'(t), \vecR(t), \vecR'(t) \right) = 0.
  \end{equation}
  Using Equation~\eqref{eqn:frenetserret} and Equation~\eqref{eqn:Rexpl} for ruled surfaces with planar directrices, this condition simplifies to 
\begin{equation}\label{eqn:dev1}
\frac{\varphi'(t)}{s'(t)} = \tau(t) + K(t) \sin\varphi(t) \cot \theta(t); 
\end{equation}
see~\cite{mundilovaphd} for more details.

Given a developable surface $\vecS(t,u)$, we will parametrize its flattened configuration, the \emph{development}, by 
$\vecs(t,u) = \vecx(t) + u \vecr(t)$, where $\vecx(t) : T \to \R^2$ represents the 2D counterpart of the directrix $\vecX(t)$, and $\vecr(t) : T \to S^1$ the unit-length 2D ruling direction; see Figure~\ref{fig:ill_not_patch}.

 To obtain the developed directrix $\vecx(t)$, we consider the \emph{geodesic curvature} of $\vecX(t)$ as a curve on $\vecS(t,u)$, that is, the curvature of the projection of $\vecX(t)$ on $\Pi(t)$ at parameter $t$,
  \begin{equation}\label{eqn:curv1}
  k(t) = K(t) \cos \varphi(t).
  \end{equation} 
We obtain $\vecx(t)$ by determining the 2D curve with signed curvature $k(t)$ and parame\-trization speed $s(t)$. This amounts in solving the system of differential equations $\vecx'(t) = s'(t) \vect(t)$, where 
 \begin{equation}\label{eqn:frenetserret2D}
      \begin{pmatrix}
      \vect'(t) \\
      \vecn'(t) 
      \end{pmatrix}
      = s'(t) \begin{pmatrix}
          0 & k(t)  \\
          -k(t) & 0  
      \end{pmatrix}
      \begin{pmatrix}
      \vect(t) \\
      \vecn(t) 
      \end{pmatrix}.
      \end{equation}
As isometry preserves angles on surfaces, particularly the oriented angle between $\vecT(t)$ and $\vecR(t)$ or $\vecP(t) \times \vecT(t)$, the developed ruling directions read 
$$
\vecr(t) = \cos \theta(t) ~\vect(t) + \sin \theta(t) ~\vecn(t).
$$

\subsubsection{Ruling curvature}\label{sec:rulingcurv}

In subsequent sections, we will consider bent configurations $\vecS(t,u)$ of a planar developable patch $\vecs(t,u)$. To show that two such deformations are identical, we calculate the curvature that indicates the surface's bend perpendicular to the rulings, following the methodology of~\cite{demaine2015characterization, demaine2018conic}. This \emph{ruling curvature} is the normal curvature at a given point on an arc-length parametrized curve perpendicular to the rulings at parameter $t$, expressed as:
\begin{equation}\label{eqn:rulcurv}
V(t) = s'(t) K(t) \sin\varphi(t) \frac{1}{\sin\theta(t)} = s'(t) k(t) \tan\varphi(t) \frac{1}{\sin\theta(t)}
\end{equation}

We make note of the following property, which directly follows form Equation~\eqref{eqn:rulcurv}, Equation~\eqref{eqn:curv1}, and Equation~\eqref{eqn:dev1}.

\begin{lemma}\label{lem:rulcurv}
    The ruling curvature determines the bend configuration of a developed patch up to Euclidean motion.
\end{lemma}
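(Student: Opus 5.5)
The idea is to show that, once the development is fixed, the ruling curvature $V(t)$ determines all the data $(K,\tau,s',\varphi,\theta)$ that specify the bent patch $\vecS(t,u)$ up to Euclidean motion.

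\textbf{Data fixed by the development.} The development $\vecs(t,u)=\vecx(t)+u\vecr(t)$ fixes three quantities: the parametrization speed $s'(t)$, the geodesic curvature $k(t)$ of $\vecx(t)$, and the ruling angle $\theta(t)$, read off from $\vecr(t)=\cos\theta(t)\,\vect(t)+\sin\theta(t)\,\vecn(t)$. These are the same for every bent configuration, because isometry preserves arc length, geodesic curvature and angles on the surface. On the open set where the rulings are transversal to the directrix we have $\sin\theta(t)\neq 0$. We assume $\theta\in(0,\pi)$ there, which keeps Equation~\eqref{eqn:rulcurv} and Equation~\eqref{eqn:dev1} meaningful.

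\textbf{Recovering $\varphi$ and $K$.} Given $V(t)$, Equation~\eqref{eqn:rulcurv} gives
\[
\tan\varphi(t)=\frac{V(t)\sin\theta(t)}{s'(t)\,k(t)}
\]
wherever $k(t)\neq 0$. This fixes $\varphi(t)$ modulo $\pi$, and continuity of $\varphi$ together with the sign conventions of the frame removes the ambiguity. Equation~\eqref{eqn:curv1} then gives $K(t)=k(t)/\cos\varphi(t)$; note $\cos\varphi\neq 0$ whenever $k\neq 0$. An equivalent route avoids the division: $K\cos\varphi=k$ and $K\sin\varphi=V\sin\theta/s'$ fix the planar vector $(K\cos\varphi,K\sin\varphi)$. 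This determines $K$ and $\varphi$ up to the simultaneous change $(K,\varphi)\mapsto(-K,\varphi+\pi)$. That change only flips $\vecN$ and $\vecB$ and leaves $\vecP$ and $\vecR$ unchanged, so it gives the same surface.

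\textbf{Recovering $\tau$ and the surface.} The developability condition Equation~\eqref{eqn:dev1} then gives the torsion explicitly:
\[
\tau(t)=\frac{\varphi'(t)}{s'(t)}-K(t)\sin\varphi(t)\cot\theta(t).
\]
Now $K$, $\tau$ and $s'$ are known, so the Frenet--Serret system Equation~\eqref{eqn:frenetserret} determines $\vecX(t)$ and the frame $(\vecT,\vecN,\vecB)$ uniquely up to Euclidean motion, by uniqueness of solutions of linear ODEs given initial data. With $\varphi$ and $\theta$ known, Equation~\eqref{eqn:Rexpl} determines $\vecR(t)$, and hence $\vecS(t,u)$ up to Euclidean motion.

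\textbf{Main obstacle: degenerate points.} The delicate part is where $k(t)=0$. There $\tan\varphi$ cannot be read from $V$, and in the plane-vector formulation the angle $\varphi$ is undefined when also $V=0$.
\begin{itemize}
\item On intervals where $V\equiv 0$ the patch is planar. Any admissible $\varphi$ then yields the same surface, namely the flat one, so uniqueness holds trivially.
\item At isolated degenerate points, I would extend $\varphi$ by continuity, which the standing $C^2$ assumption allows, and use density of the regular set.
\end{itemize}
I expect this regularity bookkeeping, not the algebra, to require the most care.
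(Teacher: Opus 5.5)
Your proposal is correct and matches the paper's argument: the paper only remarks that the lemma follows directly from Equations~\eqref{eqn:rulcurv}, \eqref{eqn:curv1}, and \eqref{eqn:dev1}, which is exactly your chain $V \to \varphi \to K \to \tau$ followed by integrating the Frenet--Serret system. Your handling of the ambiguity $(K,\varphi)\mapsto(-K,\varphi+\pi)$ and of points with $k(t)=0$ is additional care that the paper does not spell out.
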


\subsection{Folded states of crease-rule patterns}

In this section, we discuss the differential and algebraic equations that govern the computation of folded states of a given crease-rule pattern.

\subsubsection{Geometry of a single crease}\label{sec:singlecrease}

We begin by reviewing the well-known computation of folded states for a single crease, where the rulings of the two adjacent patches are prescribed~\cite{fuchs1999more, demaine2015characterization}.

\begin{figure}[t]
  \centering
  \begin{scriptsize}
\begin{overpic}[width=0.49\textwidth,trim = 0cm 0cm 0cm 0.cm, clip]{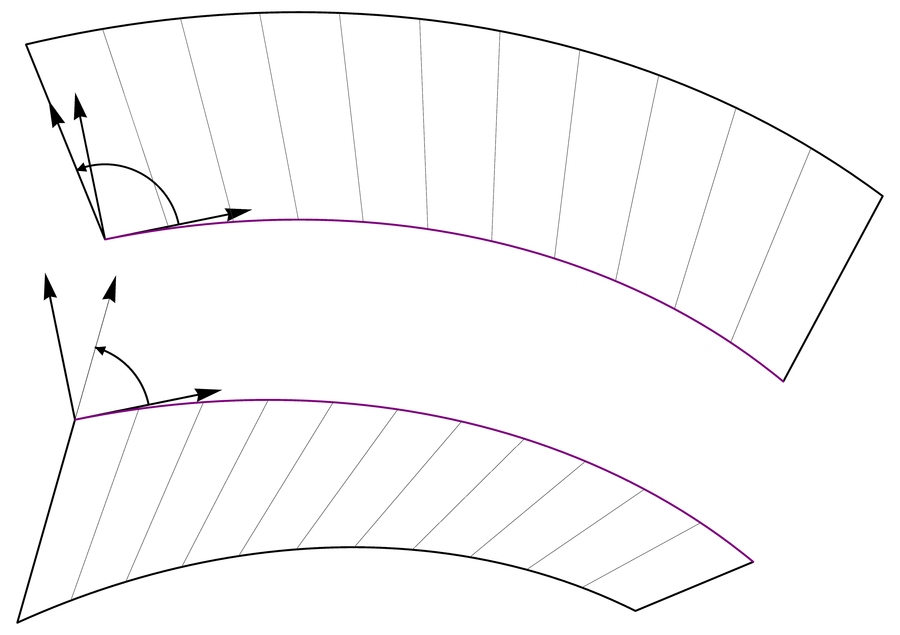}
        \put(15,53){$\theta_{L}(t)$}
        \put(85,24){$\vecx(t)$}
        \put(5,62){$\vecr_{L}(t)$}
        \put(60,68){$\vecs_L(t,u)$}

        \put(15,30){$\theta_{R}(t)$}
        \put(83,12){$\vecx(t)$}
        \put(14,40){$\vecr_R(t)$}
        \put(50,2){$\vecs_R(t,u)$}
       
        \put(43,55){``left''}
        \put(35,15){``right''}
    \end{overpic}
  \end{scriptsize}
    \begin{scriptsize}
        \begin{overpic}[width=0.49\textwidth,trim = 1cm 0.5cm 0.5cm 0.5cm, clip]{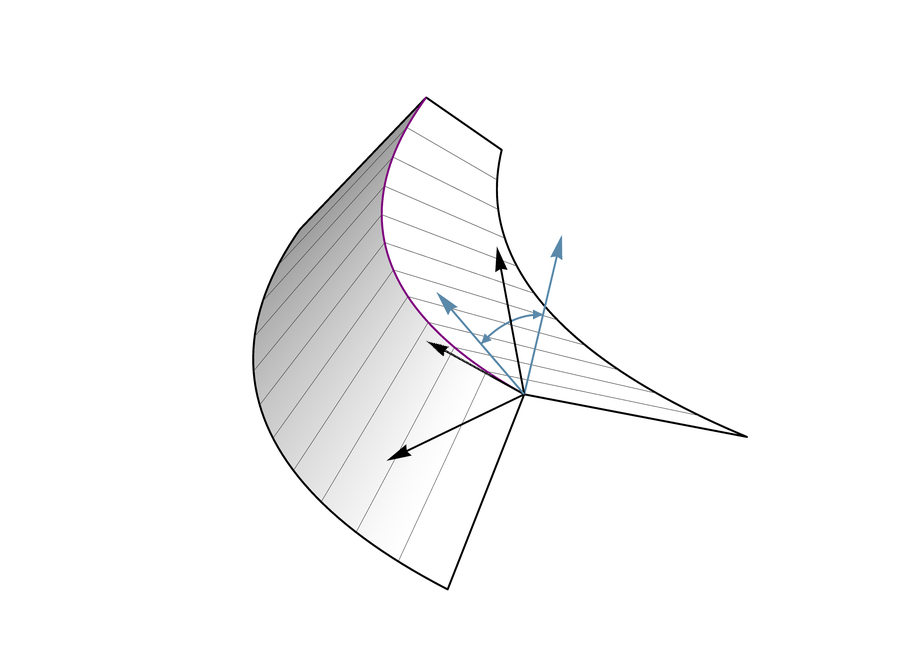}   
        \put(57,25){$\vecX(t)$}
        \put(59,35){$\varphi_{R}(t)$}
        \put(42,29){$\varphi_{L}(t)$}
        \put(48,52){$\vecB(t)$}
        \put(34,35){$\vecT(t)$}
        \put(33,16){$\vecN(t)$}
        \put(58,53){$\vecP_R(t)$}
        \put(40,46){$\vecP_L(t)$}
        \end{overpic}%
        \end{scriptsize}%
  \caption{Illustration of the notation introduced in Section~\ref{sec:singlecrease} (as in~\cite{mundilovaphd, mundilova2024planar}).}\label{fig:singlecrease}
\end{figure}

 A curved crease, denoted as $\vecx(t)$, locally divides the sheet into two sides, a ``left'' and ``right'' side with respect to the orthonormal frame of $\vecx(t)$, 
\begin{align*}
\vecs_L(t,u) = \vecx(t) + u \vecr_L(t)
&&    \text{and} &&
    \vecs_R(t,u) = \vecx(t)- u \vecr_R(t),   
\end{align*}
where 
$\vecr_j(t) = \cos \theta_j(t) ~\vect(t) + \sin \theta_j(t) ~ \vecn(t)$ for $j \in \{L,R\}$ are the (left-side) ruling directions and $\left(\vect(t), \vecn(t)\right)$ the local orthonormal frame; see Figure~\ref{fig:singlecrease}.
Let $k(t)$ and $s(t)$ denote the curvature and arc-length of the crease curve $\vecx(t)$.

To compute the corresponding 3D configuration, that is, 
\begin{align*}
\vecS_L(t,u) = \vecX(t) + u \vecR_L(t)
&&    \text{and} &&
    \vecS_R(t,u) = \vecX(t)- u \vecR_R(t),   
\end{align*}
we determine the curvature $K(t)$ and torsion $\tau(t)$ of the 3D crease and the inclination angles $\varphi_L(t)$ and $\varphi_R(t)$ of the adjacent surfaces.

Since Equation~\eqref{eqn:curv1} applies to both inclination angles, it follows that $\cos \varphi_L(t) = \cos\varphi_R(t)$. The interesting ``folded'' case occurs when 
$\varphi(t) = \varphi_L(t) = -\varphi_R(t)$~\cite{fuchs1999more}.
In this case, we consider the developability condition (Equation~\eqref{eqn:dev1}) for both incident surfaces and Equation~\eqref{eqn:curv1}. Solving for $\varphi'(t)$, $\tau(t)$, and $K(t)$ results in  
\begin{align}
\varphi'(t) &= \frac{1}{2} s'(t) k(t) \left(\cot \theta_R(t) + \cot \theta_L(t)\right) \tan \varphi(t),\label{eqn:phistrich}\\
\tau(t) &= \frac{1}{2} s'(t) k(t)  \left(\cot \theta_R(t) - \cot \theta_L(t)\right) \tan \varphi(t),\label{eqn:tau} \\
K(t) &= \frac{k(t)}{\cos\varphi(t)}.\label{eqn:kappa}
\end{align}

The function $2 \varphi(t)$ quantifies the deviation from a configuration in which the tangent planes are aligned, indicating a state where the paper is locally uncreased. Consequently, $\varphi(t)$ represents \emph{half of the fold-angle}. Additionally, it is important to note that $\varphi(t)$ is determined by the differential equation in Equation~\eqref{eqn:phistrich} up to the initial value. In fact, the inclination can be obtained as
\begin{align}\label{eqn:phi}
\varphi(t) = \arcsin\left(c_0 e^{\int_0^t f(\bar t) d\bar t}\right), &&  \text{where} && 
f(t) = \frac{1}{2} s'(t) k(t) \left(\cot \theta_L(t) + \cot \theta_R(t)\right)
\end{align}
and $c_0 = \sin \varphi(0)$ is an appropriate initial value. For $\varphi(t)$ to be real-valued for all $t \in T$, we require 
$$
\left|c_0 \right|\leq c_{\max} = \min_{t\in T} e^{-\int_0^t f(\bar t)d \bar t}.
$$
Upon successful computation of the inclination angle $\varphi(t)$, the curvature and torsion of $\vecX(t)$ follow from Equation~\eqref{eqn:tau} and Equation~\eqref{eqn:kappa}. 

The 3D configuration is then obtained by solving the Frenet-Serret equations in Equation~\eqref{eqn:frenetserret} and constructing the corresponding 3D ruling directions $\vecR_L(t)$ and $\vecR_R(t)$ (Equation~\eqref{eqn:Rexpl}) for appropriate inclination and ruling angles\footnote{Since typically $c_{\max} > 0$, there generally exists a one-parameter family of suitable fold-angles in the vicinity of the flat state.}.

There are the following two special cases, corresponding to the cases where the ruling angles are equal or sum up to $\pi$:
\begin{itemize}
\item \emph{Planar creases:} If $\theta_L(t) = \theta_R(t)$, it follows from Equation~\eqref{eqn:tau} that the torsion vanishes, $\tau(t) = 0$, and the crease becomes a planar crease. In general, a ``maximally folded state'' occurs when one tangent plane becomes perpendicular to the plane containing the curve.
\item \emph{Constant fold-angle creases:} If 
$\theta_L(t) = \pi-\theta_R(t)$, it follows from Equation~\eqref{eqn:phistrich} that the first derivative of the inclination angle, 
$\varphi'(t) = 0$, vanishes, indicating that the crease maintains a constant inclination angle and, consequently, a constant fold-angle along the crease. Since $\varphi(t) = \arcsin(c_0)$ is real-valued for $c_0 \in \left[-\frac{\pi}{2},\frac{\pi}{2}\right]$, the full folding motion exists. In the discrete case, this crease corresponds to a sequence of flat-foldable vertices with the same assignment along the discretized crease. The fully flat-folded state of the smooth configuration corresponds to a completely ``rolled-up'' configuration.
\end{itemize}

In addition, we regard creases along straight segments as both planar and constant fold-angle creases, regardless of their incident rulings.

\subsubsection{Crease-rule patterns }

The goal of this paper is to analyze conjugate-net–preserving isometries of conjugate nets obtained through one-directional refinement of 
PQ meshes. While artistic curved-crease origami designs may arrange creases in many different ways, we restrict our attention to those formed by a sequential arrangement. 

\begin{definition}
    A \emph{regular crease-rule pattern} $\mathcal{P}$ is a sequence of non-intersecting planar smooth crease curves $\vecx_1(t)$, \dots, $\vecx_n(t): T \to \R^2$, together with two boundary curves $\vecx_0(t), \vecx_{n+1}(t) : T \to \R^2$, such that 
    \begin{itemize}
        \item Each $\vecx_i(t)$ lies to the left of $\vecx_{i-1}(t)$, for all $i \leq n+1$, and 
        \item The interiors of the patches
    $$
    \vecp_{i,R}(t,u) = (1-u) \vecx_i(t) + u \vecx_{i+1}(t) \quad \text{ with } \quad (t,u) \in T\times (0,1),
    $$
   are disjoint, their rulings are not aligned with the tangents of the boundary curves, and the boundary curves are free of singularities of the patches.
    \end{itemize} 
\end{definition}

In the following, we will denote by $s_i(t)$ the curve's $\vecx_i(t)$ arc-lengths and by $k_i(t)$ their curvatures. The (left-side) ruling directions of patches bounded by two creases are 
 \begin{equation}\label{eqn:rulingdir}
\vecr_{i,L}(t) =\vecr_{i+1,R}(t) = \frac{\vecx_{i+1}(t) - \vecx_i(t)}{\left|\vecx_{i+1}(t) - \vecx_i(t)\right|}.
\end{equation}
Moreover, the ruling angles of the patches that are bounded by two creases can then be obtained from 
\begin{align}
    \theta_{i,j}(t) &= 
    \arctan\left(\vect_{ij}(t) \cdot \vecr_{ij}(t), 
    \vecn_{ij}(t) \cdot \vecr_{ij}(t)\right),\label{eqn:rulingangle1}
\end{align}
where $j\in \{L,R\}$. 
Here, $\vect_{i}(t) $ represents the unit tangent, while $\vecn_i(t)$ stands for the (left-side) normal of 
$\vecx_{i}(t)$, respectively.

\subsubsection{Computation of folded states of crease-rule patterns}\label{sec:multiplecrease}

\begin{figure}[t]
  \centering
  \begin{scriptsize}
        \begin{overpic}[width=0.49\textwidth]{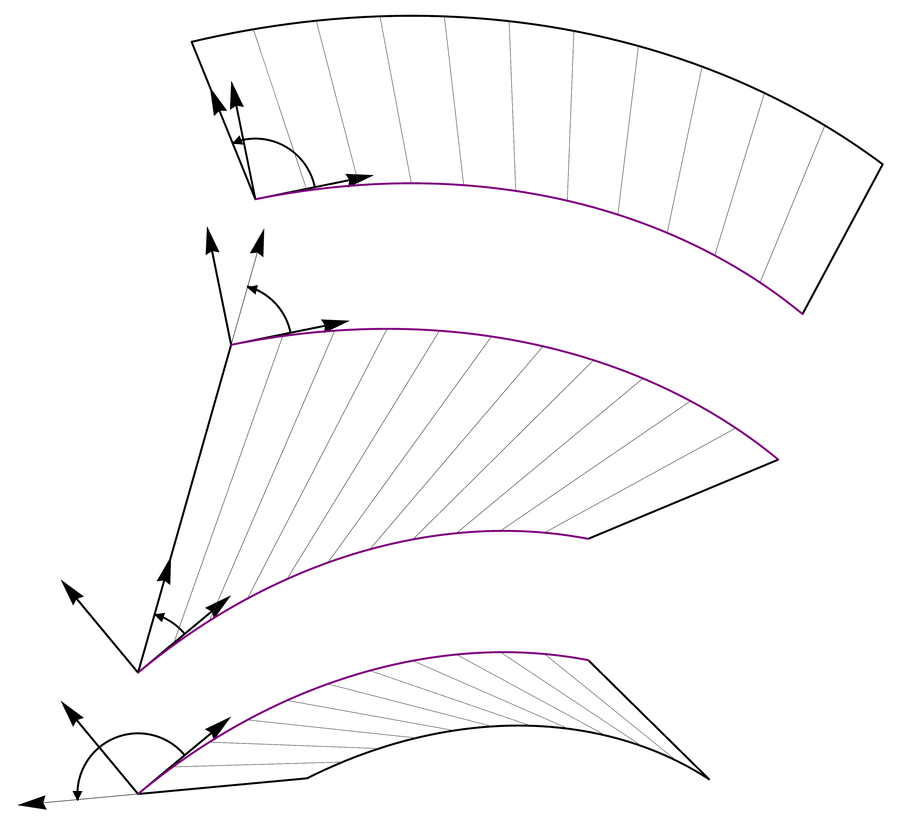}
        \put(10,11){$\theta_{1,R}(t)$}
        \put(68,17){$\vecx_{1}(s_1(t))$}
        \put(-3,4){$\vecr_{1,R}(t)$}

        \put(20,17){$\theta_{1,L}(t)$}
        \put(66,29){$\vecx_{1}(s_1(t))$}
        \put(20,30){$\vecr_{1,L}(t)$}

        \put(30,58){$\theta_{2,R}(t)$}
        \put(77,48){$\vecx_{2}(s_2(t))$}
        \put(30,63){$\vecr_{2,R}(t)$}

        \put(30,76){$\theta_{2,L}(t)$}
        \put(67,55){$\vecx_{2}(s_2(t))$}
        \put(11,80){$\vecr_{2,L}(t)$}

        \put(45,40){``central''}
        \put(40,12){``right''}
        \put(54,78){``left''}
    \end{overpic}
    \end{scriptsize}
    \begin{scriptsize}
    \begin{overpic}[width=0.49\textwidth,trim = 1cm 0.5cm 0.5cm 0cm, clip]{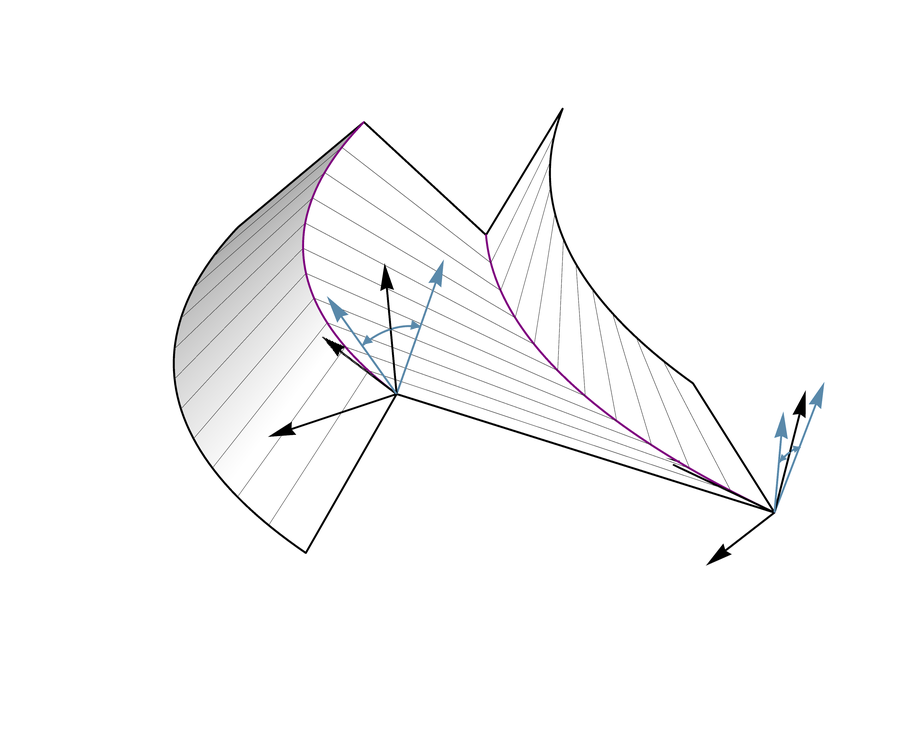}
        \put(90,21){$\vecX_1(t)$}
        \put(36,34){$\vecX_2(t)$}
        \put(75,14){$\vecT_1(t)$}
        \put(93,29){$\varphi_{1L}(t)$}
        \put(88,44){$\vecB_1(t)$}
        \put(42,45){$\varphi_{2R}(t)$}
        \put(23,42){$\varphi_{2L}(t)$}
        \put(32,61){$\vecB_2(t)$}
        \put(17,49){$\vecT_2(t)$}
        \put(78,33){$\varphi_{1R}(t)$}
        \end{overpic}%
       \end{scriptsize} %
  \caption{Illustration of the notation introduced in Section~\ref{sec:multiplecrease} (as in~\cite{mundilovaphd, mundilova2024planar}).}\label{fig:twocreases}
\end{figure}

We continue by discussing the computation of folded states of regular crease-rule pattern $\mathcal{P}$, 
consisting of the $n$ folded crease curves $\vecX_i(t)$, determined by the corresponding  curvatures $K_i(t)$, torsions $\tau_i(t)$, and left and right inclination angles $\varphi_{iL}(t)$ and $\varphi_{iR}(t)$. 

Before we proceed, we conduct a similar simplification as in the single crease case. Specifically, as the curvatures and inclination angles are related by 
\begin{align}
k_{i}(t) = K_{i}(t) \cos \varphi_{iL}(t) &&\text{and}&& k_{i}(t) = K_{i}(t) \cos \varphi_{iR}(t),
\end{align}
we have that $\cos \varphi_{iL}(t) = \cos \varphi_{iR}(t)$.
Analogously to the single crease, we focus on the case where $\varphi_i(t) = \varphi_{iL}(t) = -\varphi_{iR}(t)$.
This simplification leaves us with $3n$ unknowns, namely the 3D curvatures $K_i(t)$, torsions $\tau_i(t)$, and inclination angles $\varphi_i(t)$. 

A valid folded state satisfies the following algebraic and differential constraints:

\begin{itemize}
    \item For every crease, the left and right patches must be developable, 
\begin{align}\label{eqn:dev3}
\varphi_{i}'(t) = \sigma_{ij} s_i'(t)\tau_{i}(t) + s_i'(t) k_{i}(t) \tan \varphi_{i}(t) \cot \theta_{ij}(t),
\end{align}
where $\sigma_{iL} = +1$ and $\sigma_{iR} = -1$. 
This results in $2n$ constraints. 

\item Additionally, for each crease, we relate the curvatures and inclination angles by 
\begin{align}\label{eqn:curv3}
k_{i}(t) = K_{i}(t) \cos \varphi_{i}(t), 
\end{align}
resulting in another $n$ constraints.

\item Finally, for every pair of overlapping surfaces, we require them to have the same ruling curvatures. This requirement leads to the following $n-1$ constraints:
\begin{equation}\label{eqn:common3}
s_i'(t) k_i(t) \tan \varphi_{i}(t) \frac{1}{\sin \theta_{i,L}(t)} = 
-s_{i+1}'(t) k_{i+1}(t) \tan \varphi_{i+1}(t) \frac{1}{\sin \theta_{i+1,R}(t)}.
\end{equation}
\end{itemize}

Note that a solution to these $4n-1$ constraints provides a sufficient condition for computing a folded state: the crease curves can be determined sequentially and are guaranteed to fit together by isometry (see Lemma~\ref{lem:rulcurv}). The boundary curves are obtained by propagating distances along the boundary developables from the outermost crease curves.

\begin{definition}\label{def:folded_state}
We say a regular crease-rule pattern $\mathcal{P}$ is said to admit a \emph{folded state} if there exist functions $K_i(t)$, $\tau_i(t)$, and $\varphi_i(t)$, such that 
\begin{itemize}
    \item Equation~\eqref{eqn:dev3} and Equation~\eqref{eqn:curv3} are satisfied for every crease, and 
    \item Equation~\eqref{eqn:common3} is satisfied for every interior patch.
\end{itemize} 
A folded state is \emph{non-trivial}, if no inclination angle is identically zero.

We say a crease-rule pattern has a \emph{rigid-ruling folding motion}, if it has a continuous family of non-trivial folded states. 
\end{definition}

Ultimately, for a regular crease-rule pattern with 
$n$ creases, the system involves $4n-1$ constraints imposed on $3n$ variables. Thus, for $n>1$, it is typically overconstrained. This is due to each pair of adjacent patches acting as a one-degree-of-freedom mechanism, which may fail to produce a congruent configuration of the shared surface. Strategies for introducing additional degrees of freedom into this system are discussed in~\cite[Part I]{mundilovaphd}.

\paragraph{Remark on vanishing curvatures.}
Note that the definition of a crease-rule pattern does not explicitly forbid crease curves containing points or segments of vanishing curvature. Intuitively, in regions where $k_i(t)$, a folded configuration corresponds to two flat regions joined along the crease. To avoid case distinctions, the following discussion focuses only on crease patterns whose crease curvatures vanish only at isolated points.

For Equation~\eqref{eqn:common3} to be satisfied at such parameter values without introducing singularities, the curvatures of creases connected by a ruling should also vanish. Accordingly, we restrict our attention to patterns whose inflection points are connected by rulings:

\begin{definition}
Let $\mathcal{P}$ be a regular crease-rule pattern with at least one crease, and let $T_0 \subseteq T$ denote the set of isolated parameter values for which $k_1(t) = 0$. We call $\mathcal{P}$ a \emph{candidate crease-rule pattern}, if for all other creases in the pattern, $k_1(t) = 0$ implies $k_i(t) = 0$. 
\end{definition}

While we conjecture that crease-rule patterns which are not candidate patterns do not admit a rigid-ruling motion, we omit the corresponding technical details here. Our main result in Section~\ref{sec:mainresult} provides two necessary and sufficient conditions for a candidate crease-rule pattern to admit a rigid-ruling folding motion.

\subsection{Property-preserving operations on crease-rule patterns}\label{sec:parallel}

To conclude the preliminary discussions, we describe two operations on crease-rule patterns that involve parallelism. Both operations preserve the existence of folded states, rigid-ruling foldability of the patterns, and the types of curved creases (planar or constant fold-angle). Although both operations naturally extend to general semi-discrete conjugate nets, see~\cite{mundilovaphd}, in this paper we focus exclusively on the globally developable case.

\subsubsection{Preliminaries}\label{sec:comb_prelim}

We begin by reviewing the properties and computational methods of curves with parallel frames.

\paragraph{Curves with parallel frames.} Recall that curves with parallel frames (and same orientation of vectors) at corresponding parameter values satisfy the same Frenet–Serret formulas~\cite{hatzidakis1902om,salkowski1909transformation}.  Consequently, the quantities of a 3D curve $\vecX(t)$ with curvature $K(t)$, torsion $\tau(t)$, and arc-length $s(t)$ relate to the curvature $\tilde K(t)$, torsion $\tilde\tau(t)$, and arc-length $\tilde s(t)$ of a curve with parallel frame by 
\begin{align*}
s'(t)K(t) = \tilde s'(t) \tilde K(t) && \text{ and } && s'(t) \tau(t) = \tilde s'(t) \tilde \tau(t),
\end{align*}
and the curves are related through integration of the tangent vectors with different parametrization speeds, that is, 
$\vecX'(t) = s'(t) \vecT(t)$ and $\tilde{\vecX}'(t) = \tilde{s}'(t) \vecT(t)$, where $\vecT(t)$ is the common tangent vector. Equivalently, the curves relate by $s'(t) \tilde\vecX'(t) = \tilde s'(t) \vecX'(t)$.

Similarly, in the two-dimensional case, the curvatures $k(t)$ and $\tilde k(t)$ of two parallel curves $\vecx(t)$ and $\tilde{\vecx}(t)$ with arc-lengths $\tilde{s}(t)$ and $\tilde{\vecx}(t)$ relate by
$$
s'(t)k(t) = \tilde s'(t) \tilde k(t),
$$
and we have that $\vecx'(t) = s'(t) \vect(t)$ and $\tilde{\vecx}'(t) = \tilde{s}'(t) \vect(t)$, where $\vect(t)$ is the common tangent vector.

\paragraph{Curves on developable patches with parallel frames.}  Among the many curves with parallel frames, the following crease-rule pattern modifications make use of finding curves with parallel frames on a given developable surface. In what follows, we briefly discuss the corresponding computation. 

Let $\vecs(t,u) = \vecx(t) + u \vecr(t)$ be a 2D patch and $\vecx_t(t)$ a 2D curve. Our goal is to locate a curve $\tilde\vecx(t)$ on $\vecs(t,u)$ such that its tangents at corresponding parameters $t$ are parallel to the tangents of $\vecx_t(t)$. While the following computation requires $|\vecx_t'(t) \times \vecr(t)| \neq 0$ for all $t\in T$, we do not assume that the curve $\vecx_t(t)$ lies on $\vecs(t,u)$. 

To compute the unknown curve $\tilde\vecx(t)$, we assume that it can be parametrized by $\tilde{\vecx}(t) = \vecx(t) + l(t) \vecr(t)$, where $l(t)$ is an initially unknown function. Let $\vecn_t(t)$ denote the normal vector of the curve $\vecx_t(t)$. Requiring the derivative $\tilde{\vecx}'(t)$ to be orthogonal to $\vecn_t(t)$, that is, $\tilde{\vecx}'(t) \cdot \vecn_t(t) =0$, implies
$$
l'(t) = 
- \frac{\vecn_t(t) \cdot \vecr'(t)}{\vecn_t(t) \cdot \vecr(t) } l(t)  - \frac{\vecn_t(t) \cdot \vecx'(t)}{\vecn_t(t) \cdot \vecr(t)}.
$$
This condition is an explicit first-order differential equation for the length function $l(t)$, solved by 
$$
l(t) = e^{\int_0^t a(\xi)} d\xi \left( l(0) + \int_0^t e^{-\int_0^{\eta} a(\xi)d\xi} b(\eta) d\eta\right), \quad \text{where} \quad a(t) = - \frac{\vecn_t(t) \cdot \vecr'(t)}{\vecn_t(t) \cdot \vecr(t) } \text{ and } b(t) = - \frac{\vecn_t(t) \cdot \vecx'(t)}{\vecn_t(t) \cdot \vecr(t)}.
$$
Given a reasonable combination of developable patch, curve, and initial value $l(0)$, we therefore expect to find a local solution for $l(t)$.

\subsubsection{Combescure-transformations of crease-rule patterns}\label{sec:parcurves}

Recall that two smooth or discrete conjugate nets are said to be related by a Combescure transformation if, at each point, the corresponding partial derivatives or edges are parallel~\cite{combescure1867determinants,eisenhart1923transformations,kilianSmoothDiscreteConeNets2023}.

In this section, we describe how to compute a semi-discrete Combescure transformation $\tilde{\mathcal{P}}$ of a crease-rule pattern $\mathcal{P}$ and highlight key properties of the transformed pattern. This operation is particularly useful for simplifying casework, as used in~\cite{mundilova2024planar} or later in Section~\ref{sec:mainresult}.

In the following, suppose we are given a crease-rule pattern $\mathcal{P}$ containing $n$ creases $\vecx_i(t)$ and their incident rulings, a continuous function $p_0(t) > 0$, and $n+1$ positive initial lengths $(l_0(0), \dots, l_{n}(0))$.

To initialize the construction, we use the function $p_0(t)$ to determine the boundary curve $\tilde{\vecx}_0(t)$ of $\tilde{\mathcal{P}}$ parallel to $\vecx_0(t)$. Specifically, we set  
    \begin{align*}
\tilde s_0'(t) = p_0(t) s_0'(t) 
\end{align*}
and obtain the first curve $\tilde\vecx_0(t)$ from integrating $\tilde \vecx_0'(t) = p_0(t) \vecx_0'(t)$, or equivalently $\tilde\vecx_0'(t) = \tilde s_0'(t) \vect_0(t)$, where $\vect_0(t)$ is the tangent vector of $\vecx_0(t)$.
Furthermore, we define $\tilde{\vecs}_{0,L} = \tilde \vecx_0(t) + u \vecr_{0,L}(t)$, where $\vecr_{0,L}(t)$ are the rulings of the patch between $\vecx_0(t)$ and $\vecx_1(t)$. 

While there is a crease $\vecx_i(t)$ to the left of the last integrated curve corresponding to curve $\vecx_{i-1}(t)$, for a specified initial distance $l_{i-1}(0)$, we find a curve $\tilde \vecx_i(t)$ on $\tilde \vecs_{i-1,L}(t,u)$ such that its tangents are parallel to $\vecx_i(t)$, see Section~\ref{sec:comb_prelim}. 
Upon a successful computation, we again set $\tilde\vecs_{i,L}(t,u) = \tilde\vecx_i(t) + u \vecr_{i,L}(t)$.

For reasonable input, this procedure describes the sequential construction of a Combescure-transformed crease-rule pattern $\tilde{\mathcal{P}}$.
We make note of the following property that follows directly from the definition of planar and constant fold-angle creases and parallelism:

\begin{lemma}
If $\vecx_i(t)$ is a planar or a constant fold-angle crease, then $\tilde{\vecx}_{i}(t)$ has the same property.
\end{lemma}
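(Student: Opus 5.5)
The plan is to show that the Combescure construction leaves the ruling angles unchanged, and then read both properties off the single-crease classification in Section~\ref{sec:singlecrease}. Recall that a crease is planar when $\theta_{i,L}(t)=\theta_{i,R}(t)$ and has constant fold-angle when $\theta_{i,L}(t)=\pi-\theta_{i,R}(t)$. Equation~\eqref{eqn:tau} then gives $\tau_i\equiv 0$ in the first case, and Equation~\eqref{eqn:phistrich} gives $\varphi_i'\equiv 0$ in the second. Both conditions are pointwise relations between the two ruling angles at the crease, so it suffices to show $\tilde\theta_{i,j}(t)=\theta_{i,j}(t)$ for $j\in\{L,R\}$.

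To establish this, I would use the construction step by step. By design, $\tilde\vecx_i(t)$ has tangents parallel to, and with the same orientation as, those of $\vecx_i(t)$. I take the orientation from the positive speed $\tilde s_i'>0$ that holds for reasonable input, as assumed in the construction. Hence $\tilde\vect_i(t)=\vect_i(t)$ and $\tilde\vecn_i(t)=\vecn_i(t)$.

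The rulings also agree. The right ruling of $\tilde\vecx_i$ is $\vecr_{i-1,L}(t)$, because $\tilde\vecx_i$ is built on $\tilde\vecs_{i-1,L}$. The left ruling is $\vecr_{i,L}(t)$, because the construction sets $\tilde\vecs_{i,L}=\tilde\vecx_i+u\vecr_{i,L}$. These are exactly the rulings of the original pattern. Equation~\eqref{eqn:rulingangle1} then gives equal ruling angles.

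The remaining point is consistency of the definition. In $\tilde{\mathcal P}$ the ruling direction is $\tilde\vecx_{i+1}(t)-\tilde\vecx_i(t)$, normalized, and I must check that this equals the prescribed $\vecr_{i,L}(t)$. This holds because $\tilde\vecx_{i+1}$ lies on $\tilde\vecs_{i,L}$, so $\tilde\vecx_{i+1}(t)=\tilde\vecx_i(t)+l_i(t)\vecr_{i,L}(t)$ with $l_i>0$.

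For the constant fold-angle case there is an alternative route through a 3D parallel structure, using $\tilde s'\tilde K=s'K$ and $\tilde s'\tilde\tau=s'\tau$, but the ruling-angle argument is cleaner. I would also note that the whole right-hand side of Equation~\eqref{eqn:phistrich} transforms consistently, since $s'k=\tilde s'\tilde k$. The ODE for $\varphi$ is therefore literally identical, so the fold-angle functions coincide.

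The main obstacle is sign and orientation bookkeeping. First, the tangent must keep its orientation rather than flip, which requires $\tilde s_i'>0$, i.e.\ that $\tilde\vecx_i'$ is a positive multiple of $\vecx_i'$. Second, $l_i(t)$ must stay positive, so the left/right sides are preserved and $\pi-\theta$ is not swapped with $\theta$. I would handle both as part of the "reasonable input" hypothesis.
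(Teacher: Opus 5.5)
Your proposal is correct and follows the same route as the paper, which simply states that the lemma follows directly from the ruling-angle characterizations $\theta_{i,L}=\theta_{i,R}$ (planar) and $\theta_{i,L}=\pi-\theta_{i,R}$ (constant fold-angle) together with parallelism. That is exactly your observation that the Combescure construction keeps the frame $(\vect_i,\vecn_i)$ and the incident rulings, and hence the ruling angles, unchanged. Your extra bookkeeping (positivity of $\tilde s_i'$ and $l_i$, and $s_i'k_i=\tilde s_i'\tilde k_i$, so the ODE for $\varphi_i$ is literally the same) makes explicit what the paper leaves to ``reasonable input.''
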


Next, we show that a Combescure relationship between two crease patterns induces a corresponding Combescure relationship between their folded states: 

\begin{lemma}
If crease-rule pattern 
$\mathcal{P}$ has a folded state $\mathcal{F}$, then its Combescure transform $\tilde{\mathcal{P}}$ also has a folded state $\tilde{\mathcal{F}}$. 
\end{lemma}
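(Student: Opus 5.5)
The plan is to construct $\tilde{\mathcal{F}}$ explicitly from $\mathcal{F}$ by keeping the same inclination angles and rescaling curvature and torsion. First I record what the Combescure construction preserves. Each $\tilde\vecx_i(t)$ has tangent parallel to that of $\vecx_i(t)$, with $\tilde s_i'(t)\tilde k_i(t) = s_i'(t)k_i(t)$. The rulings of $\tilde{\mathcal{P}}$ are by construction the same vectors $\vecr_{i,L}(t)=\vecr_{i+1,R}(t)$ as in $\mathcal{P}$. Hence the ruling angles $\theta_{i,j}(t)$, which are measured between the common tangent $\vect_i(t)$ and the common ruling direction via Equation~\eqref{eqn:rulingangle1}, coincide for both patterns. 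One point needs checking: the ruling from $\tilde\vecx_i$ actually reaches $\tilde\vecx_{i+1}$. This holds because $\tilde\vecx_{i+1}$ is constructed on $\tilde\vecs_{i,L}$ with these same rulings.

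Given a folded state $(K_i,\tau_i,\varphi_i)$ of $\mathcal{P}$, I define
\[
\tilde\varphi_i(t)=\varphi_i(t),\qquad \tilde K_i(t)=\frac{s_i'(t)}{\tilde s_i'(t)}K_i(t),\qquad \tilde\tau_i(t)=\frac{s_i'(t)}{\tilde s_i'(t)}\tau_i(t).
\]
The verification is then term by term. In Equation~\eqref{eqn:dev3}, every $t$-dependent quantity enters only through the products $s_i'\tau_i$ and $s_i'k_i$, together with $\varphi_i$ and $\theta_{ij}$, and all of these are unchanged. So $\tilde\varphi_i'=\varphi_i'$ satisfies the tilded equations. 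Equation~\eqref{eqn:curv3} becomes $\tilde k_i=\tilde K_i\cos\varphi_i$. This follows by multiplying $k_i = K_i\cos\varphi_i$ by $s_i'/\tilde s_i'$. In Equation~\eqref{eqn:common3}, both sides involve only $s_i'k_i$, $\varphi_i$ and $\theta_{i,L}$, respectively $s_{i+1}'k_{i+1}$, $\varphi_{i+1}$ and $\theta_{i+1,R}$, all of which are invariant. Thus all the constraints of Definition~\ref{def:folded_state} hold for $\tilde{\mathcal{P}}$.

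Finally, I note the geometric interpretation: the resulting 3D creases $\tilde\vecX_i$ have frames parallel to those of $\vecX_i$ and the same 3D rulings. This follows from the Frenet–Serret description in Section~\ref{sec:comb_prelim}, once the initial frames are chosen to agree. So $\tilde{\mathcal{F}}$ is indeed a Combescure transform of $\mathcal{F}$, and it is non-trivial whenever $\mathcal{F}$ is.

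The main obstacle is justifying that the ruling angles truly coincide, since the rest is algebraic bookkeeping. This requires the parallel-tangent construction to keep orientation; I would assume $\tilde s_i'>0$, which the positivity assumptions on $p_0$ and the lengths $l_i$ are meant to ensure. It also requires the boundary curves to remain free of patch singularities, which falls under the "reasonable input" hypothesis of the construction.
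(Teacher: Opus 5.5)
Your proposal is correct and follows essentially the paper's argument: keep $\tilde\varphi_i=\varphi_i$, use that $s_i'k_i$, $s_i'K_i$, $s_i'\tau_i$ and the ruling angles $\theta_{i,j}$ are unchanged under the parallel-tangent construction, and conclude that the conditions of Definition~\ref{def:folded_state} carry over. The only difference is one of order. The paper first builds the 3D Combescure transform of $\mathcal{F}$ geometrically and then reads off the scaling from parallelism, whereas you define $\tilde K_i,\tilde\tau_i$ directly. Your factor $s_i'/\tilde s_i'$ is the consistent form of the paper's relations $\tilde K_i=p_iK_i$, $\tilde\tau_i=p_i\tau_i$, which as written conflict with $\tilde s_i=p_is_i$.
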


\begin{proof}
We prove this statement by constructing a valid folded state of $\tilde{\mathcal{P}}$ from a folded state $\mathcal{P}$.

First, we obtain $\tilde\vecX_0(t)$ by integrating $\tilde \vecX_0'(t) = p_0(t) \vecX_0'(t)$, and define $\tilde \vecS_{0,L}(t,u) = \tilde{\vecX}_0(t) + u \vecR_{0,L}(t)$ using the ruling direction $\vecR_{0,L}(t)$ of the patch between $\vecX_0(t)$ and $\vecX_1(t)$ in the folded state of $\mathcal{P}$.
While there is a curve to the left of a previously computed curve $\tilde\vecX_{i-1}(t)$, we locate $\tilde\vecX_i(t)$ as a curve on $\tilde\vecS_{i-1,L}(t,u)$, and set $\tilde\vecS_{i,L}(t,u) = \tilde\vecX_i(t) + u \vecR_{i,L}(t)$.

Note that due to isometry, the angles between rulings and tangents are unchanged, and the curves $\tilde{\vecX}_i(t)$ and $\vecX_i(t)$ have parallel tangents, and consequently frames. 
It follows that there exist positive continuous functions $p_i(t)$ that relate the quantities associated with the curves $\tilde\vecX_i(t)$ and $\vecX_i(t)$ by 
$$
\tilde s_i(t) = p_i(t) s_i(t), \quad
\tilde K_i(t) = p_i(t) K_i(t), \quad \text{ and } \quad
\tilde \tau_i(t) = p_i(t) \tau_i(t). 
$$
Additionally, again due to isometry, we have that 
$$
\tilde k_i(t) = p_i(t) k_i(t).
$$

We conclude that the constructed semi-discrete structure is a valid folded state of $\tilde{\mathcal{P}}$, since the quantities $K_i(t)$, $\tau_i(t)$, and  $\varphi_i(t)$ satisfying the conditions in Definition~\ref{def:folded_state}, imply that the corresponding quantities $\tilde K_i(t)$, $\tilde\tau_i(t)$, and $\varphi_i(t)$ also satisfy them. 

Finally, we note that, by construction, both the curve tangents and rulings of $\mathcal{F}$ and $\tilde{\mathcal{F}}$ are parallel, and therefore the folded states relate by a Combescure transformation.
\end{proof}

Finally, we conclude that the folding properties of the crease-rule patterns $\mathcal{P}$ and $\tilde{\mathcal{P}}$ are the same:

\begin{corollary}
    The crease pattern $\tilde{\mathcal{P}}$ admits a folded state if and only if $\mathcal{P}$ does. Consequently, $\tilde{\mathcal{P}}$ admits a rigid-ruling folding motion if and only if $\mathcal{P}$ admits one.
\end{corollary}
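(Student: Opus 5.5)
The plan is to use the preceding lemma twice, once in each direction, by showing that the Combescure relationship is symmetric. Then the rigid-ruling statement should follow because the construction in that lemma carries a continuous family of folded states to a continuous family.

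For the forward direction, the preceding lemma directly gives a folded state $\tilde{\mathcal{F}}$ of $\tilde{\mathcal{P}}$ from any folded state $\mathcal{F}$ of $\mathcal{P}$. For the converse, I will check that $\mathcal{P}$ is itself a Combescure transform of $\tilde{\mathcal{P}}$ in the sense of Section~\ref{sec:parcurves}:
\begin{itemize}
\item Take $1/p_0(t)>0$ as the scaling of the boundary curve, so that $\vecx_0'(t) = p_0(t)^{-1}\tilde\vecx_0'(t)$.
\item Take as initial lengths the original distances $|\vecx_{i+1}(0)-\vecx_i(0)|$, measured along the rulings.
\item Use the fact that the rulings of $\tilde{\mathcal{P}}$ are parallel to those of $\mathcal{P}$ by construction.
\end{itemize}
The curve on $\vecs_{i-1,L}$ with tangents parallel to $\tilde\vecx_i$ and the prescribed initial value is then $\vecx_i$ itself. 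This uses uniqueness of solutions to the linear first-order ODE for $l(t)$ in Section~\ref{sec:comb_prelim}, since $\vecx_i$ lies on that patch and has the required tangents. Applying the preceding lemma to $\tilde{\mathcal{P}}$ then yields the converse.

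For rigid-ruling folding, suppose $\mathcal{P}$ has a continuous family of non-trivial folded states, that is, a family $\varphi_i(t;\lambda)$ together with $K_i$ and $\tau_i$. The proof of the preceding lemma keeps the same inclination angles $\varphi_i$ and rescales $K_i$ and $\tau_i$ by the positive functions $p_i(t)$, where $p_i(t)=\tilde s_i'(t)/s_i'(t)$ is determined by the planar patterns alone. The $p_i$ are therefore independent of $\lambda$. So the transformed family $(p_iK_i,\,p_i\tau_i,\,\varphi_i)$ depends continuously on $\lambda$. It is also non-trivial, because the inclination angles are unchanged and so none of them vanishes identically. Applying the same argument from $\tilde{\mathcal{P}}$ back to $\mathcal{P}$ gives the equivalence.

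The main obstacle is the symmetry step: we must make sure the reverse construction is well defined and lands exactly on $\mathcal{P}$. This requires the nondegeneracy condition $\vecn_t\cdot\vecr\neq 0$, which holds because the rulings are not tangent to the crease curves by regularity of $\mathcal{P}$. It also requires uniqueness of the ODE solution, which is standard for continuous coefficients. A secondary check is that, in the preceding lemma, $p_i$ is the same ratio for the 3D and 2D arc-lengths. This holds because the 3D and 2D curves are related by isometry, so their speeds agree.
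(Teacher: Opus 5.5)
Your proposal is correct and follows the paper's route. The paper states the corollary as an immediate consequence of the preceding lemma, and you make explicit the two points it leaves implicit: the symmetry of the Combescure construction (scaling $1/p_0$, initial lengths $|\vecx_{i+1}(0)-\vecx_i(0)|$, uniqueness for the linear ODE in $l(t)$), and the fact that the induced map on $(K_i,\tau_i,\varphi_i)$ is a rescaling with fold-parameter-independent coefficients, so continuity and non-triviality of a family carry over.

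One harmless slip, inherited from the paper's notation: with $p_i=\tilde s_i'/s_i'$, the relations $s_i'K_i=\tilde s_i'\tilde K_i$ and $s_i'\tau_i=\tilde s_i'\tilde\tau_i$ rescale $K_i$ and $\tau_i$ by $1/p_i$, not $p_i$. This does not affect your argument, since you only use that the factor is independent of the fold parameter.
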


\subsubsection{Adding parallel pleats to crease-rule patterns}\label{sec:parrul}

\begin{figure}
    \centering
\includegraphics[width=0.19\textwidth,trim = 2.2cm 1.7cm 0.9cm 1cm, clip]{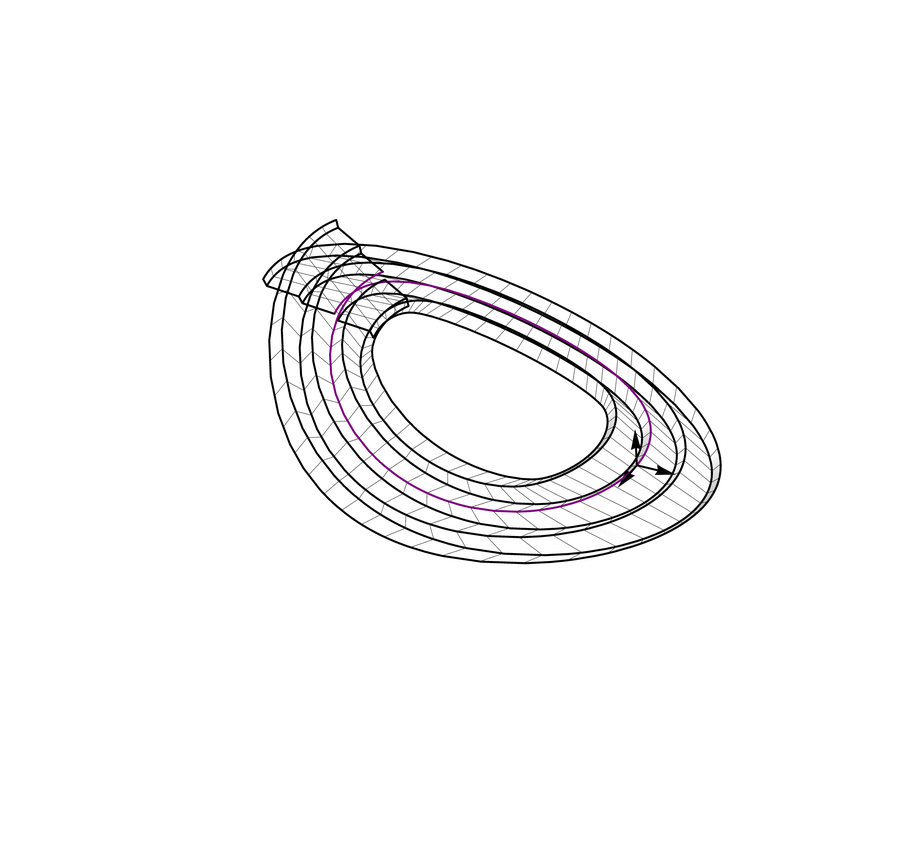}
\includegraphics[width=0.19\textwidth,trim = 2.2cm 1.7cm 0.9cm 1cm, clip]{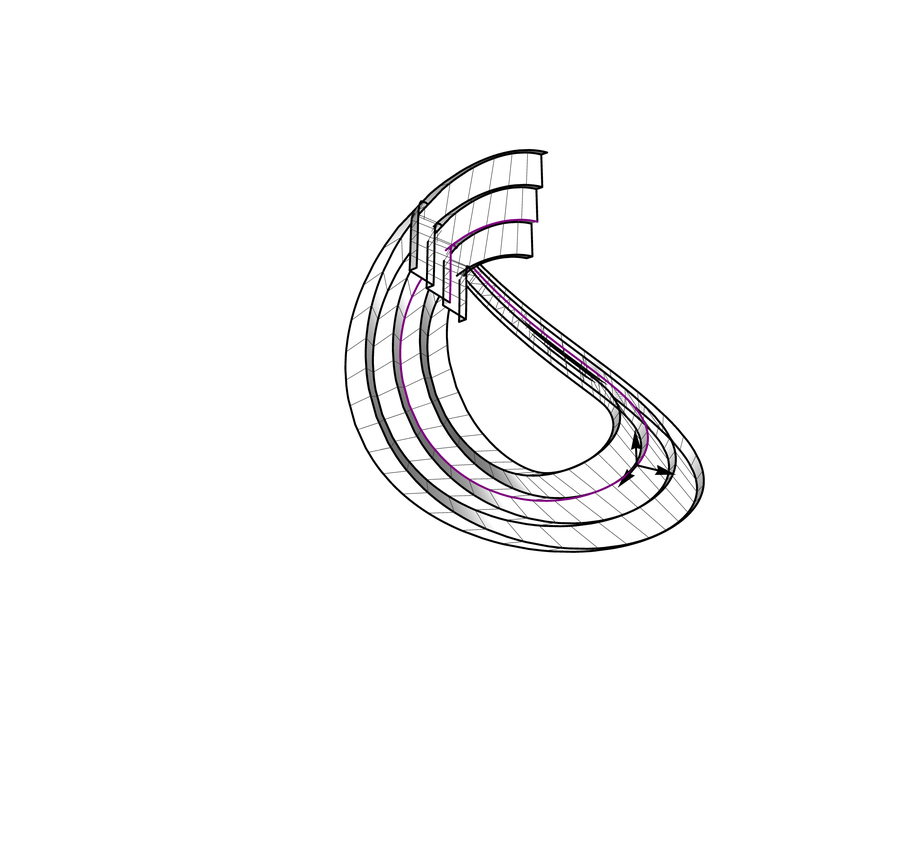}
\includegraphics[width=0.19\textwidth,trim = 2.2cm 1.7cm 0.9cm 1cm, clip]{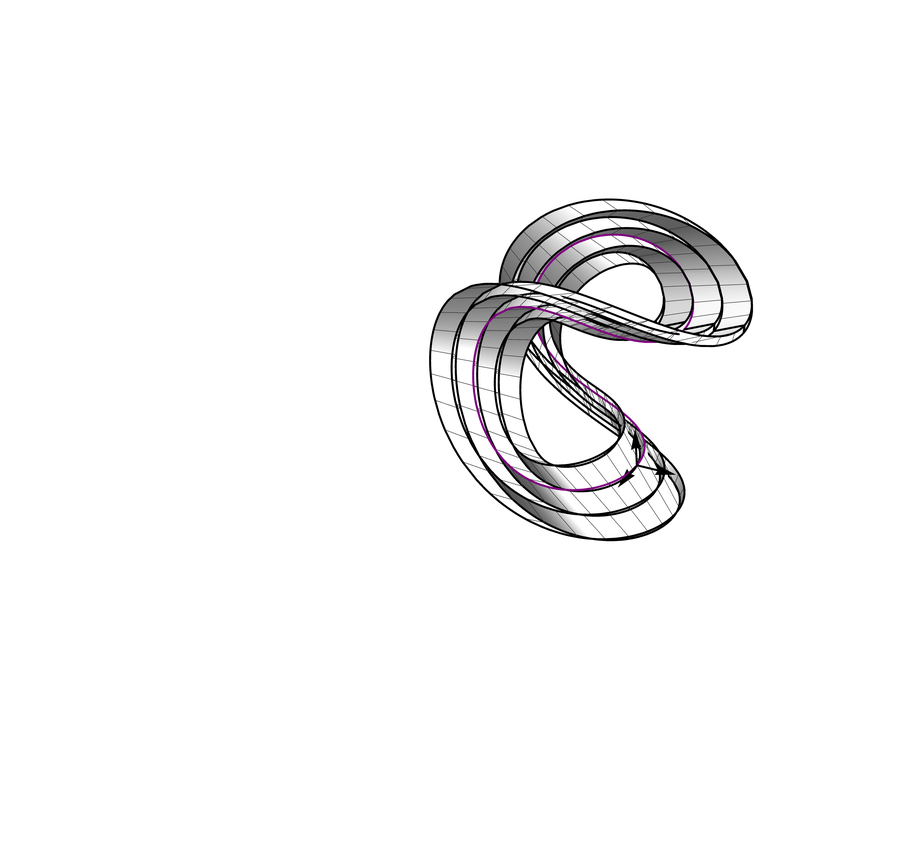}
\includegraphics[width=0.19\textwidth,trim = 2.2cm 1.7cm 0.9cm 1cm, clip]{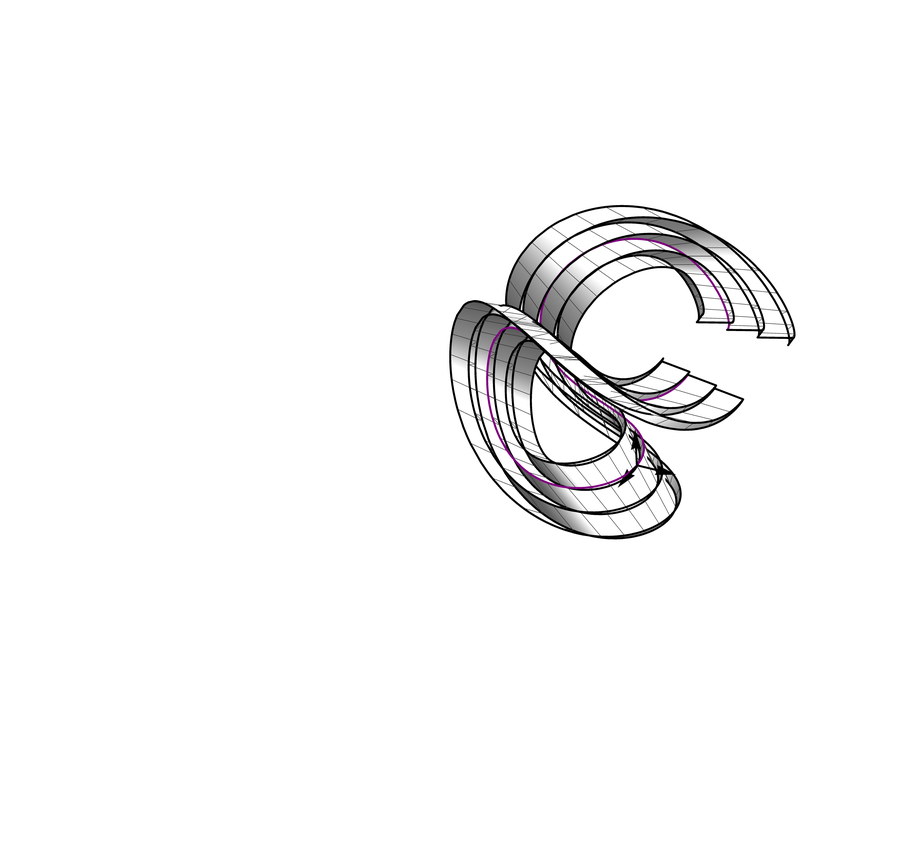}
\includegraphics[width=0.19\textwidth,trim = 2.2cm 1.7cm 0.9cm 1cm, clip]{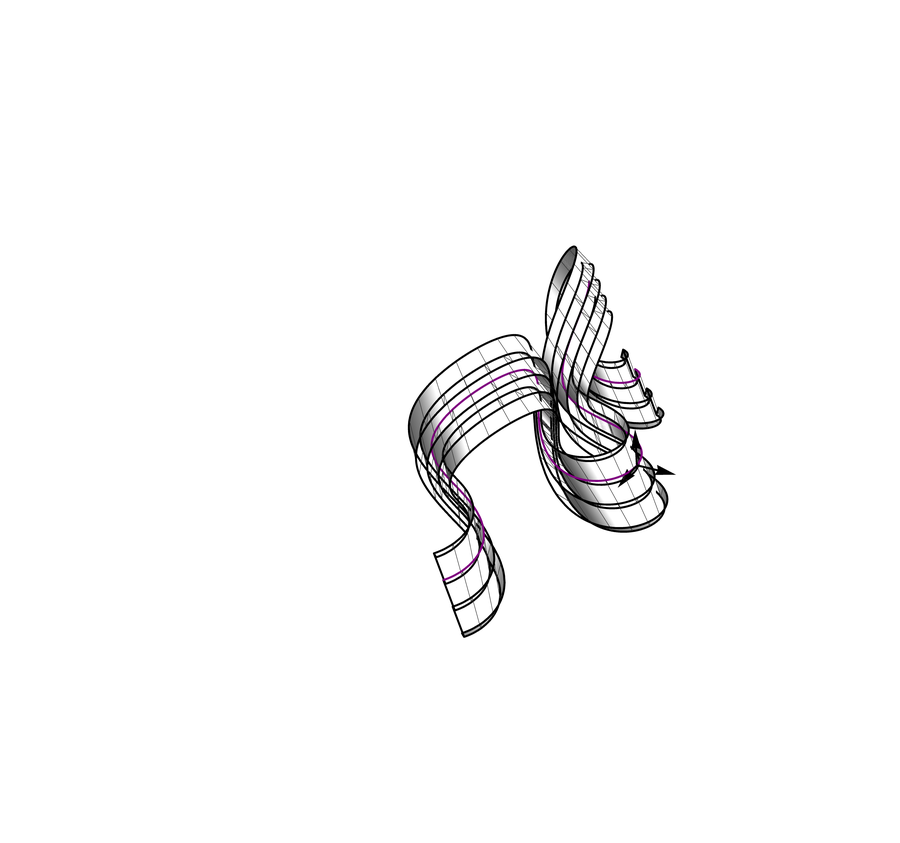}
\caption{Illustration of a rigid-ruling folding motion of a crease-rule pattern with tangent-parallel creases~\cite{mundilovaphd}.}\label{fig:parRul}
\end{figure}

Next, we review a Combescure-inspired method that uses parallelism to add creases to a crease-rule pattern~\cite{tachi2011designing, jiang2020}, while again preserving its folding properties, see Figure~\ref{fig:parRul}.

In the following, let $\mathcal{P}$ be a crease-rule pattern and $l_n(0)$ and $l_{n+1}(0)$ two initial values. We discuss the generation of a crease pattern $\tilde{\mathcal{P}}$ containing the curves $\vecx_0(t), \dots, \vecx_n(t)$ of $\mathcal{P}$ with one additional crease, $\tilde\vecx_{n+1}(t)$ together with a new left boundary $\tilde\vecx_{n+2}(t)$. 

Specifically,  using the method described in Section~\ref{sec:comb_prelim} and the first initial value $l_n(0)$, we identify a curve $\tilde\vecx_{n+1}(t)$ on $\vecs_{n,L}(t,u)$ whose tangents are parallel to the tangents of $\vecx_n(t)$ at points connected by a ruling, and set $\tilde\vecs_{n+1,L}(t,u) = \tilde\vecx_{n+1}(t) + u \vecr_{n-1,L}(t)$. Additionally, using the second initial value $l_{n+1}(0)$, we identify the left-most boundary as a curve parallel to $\vecx_{n+1}(t)$ on $\tilde\vecs_{n+1,L}(t,u)$.

 Note that this construction ensures not only that the curves $\vecx_n(t)$ and $\tilde \vecx_{n+1}(t)$ are parallel, but also that the rulings incident to these curves are parallel. Specifically, $\vecr_{n+1,L}(t) = \vecr_{n,R}(t)$ and $\vecr_{n+1,R}(t) = \vecr_{n,L}(t)$. A direct consequence is the following: 

 \begin{lemma}
If the crease $\vecx_n(t)$ is a planar or a constant fold-angle crease, so is $\tilde\vecx_{n+1}(t)$.
\end{lemma}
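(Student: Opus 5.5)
The plan is to reduce the claim to the two characterizations from Section~\ref{sec:singlecrease}. A crease with incident ruling angles $\theta_L(t)$ and $\theta_R(t)$ is planar exactly when $\theta_L(t)=\theta_R(t)$, since then the torsion in Equation~\eqref{eqn:tau} vanishes identically. It is a constant fold-angle crease exactly when $\theta_L(t)=\pi-\theta_R(t)$, since then $\varphi'(t)=0$ by Equation~\eqref{eqn:phistrich}. So it suffices to show that the ruling angles of $\tilde\vecx_{n+1}(t)$ satisfy the same one of these two relations as those of $\vecx_n(t)$. If $\vecx_n(t)$ is a straight segment, then $\tilde\vecx_{n+1}(t)$, being tangent-parallel, has constant tangent and is straight too, so it is trivially of both types. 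We may therefore assume $k_n \not\equiv 0$.

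First I would fix the frames. The construction makes the tangents of $\tilde\vecx_{n+1}(t)$ parallel to those of $\vecx_n(t)$. I would check that they also have the same orientation, so that $\tilde\vect_{n+1}(t)=\vect_n(t)$ and $\tilde\vecn_{n+1}(t)=\vecn_n(t)$. This orientation holds for small initial value $l_n(0)$ by continuity, and it is preserved along $T$ because the parallel-curve computation of Section~\ref{sec:comb_prelim} assumes the ruling is never tangent to the curve. If the orientation were reversed, the argument below would go through with the roles of left and right swapped.

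Next I would express the ruling angles in this common frame via Equation~\eqref{eqn:rulingangle1}. The construction gives $\vecr_{n+1,L}(t)=\vecr_{n,R}(t)$ and $\vecr_{n+1,R}(t)=\vecr_{n,L}(t)$. Combined with the equal frames, this gives $\tilde\theta_{n+1,L}(t)=\theta_{n,R}(t)$ and $\tilde\theta_{n+1,R}(t)=\theta_{n,L}(t)$; that is, the new crease's ruling angles are those of $\vecx_n(t)$ with left and right swapped.

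The conclusion then follows directly. Both conditions $\theta_L=\theta_R$ and $\theta_L+\theta_R=\pi$ are symmetric under swapping $L$ and $R$, so if $\vecx_n(t)$ satisfies one of them, so does $\tilde\vecx_{n+1}(t)$. In terms of the formulas, swapping $L$ and $R$ only changes the sign of $\cot\theta_R-\cot\theta_L$ and leaves $\cot\theta_R+\cot\theta_L$ unchanged, so vanishing torsion and vanishing $\varphi'$ transfer.

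The main obstacle is the bookkeeping of the "left-side" ruling convention. The right patch is written as $\vecx(t)-u\vecr_R(t)$, so the shared ruling line between $\vecx_n(t)$ and $\tilde\vecx_{n+1}(t)$ is the left ruling of $\vecx_n(t)$ and the right ruling of $\tilde\vecx_{n+1}(t)$, and both are described by the same direction vector. I would verify carefully, with a sign check on a simple example such as two parallel straight lines or concentric circles, that the identification $\vecr_{n+1,R}=\vecr_{n,L}$ indeed yields an equality of angles and not a supplement. The argument does not depend on which it is, as long as it is consistent: if both identifications instead produced supplements, the new angles would be $\pi-\theta_{n,R}$ and $\pi-\theta_{n,L}$, and both conditions are also invariant under $\theta\mapsto\pi-\theta$ applied to both angles.
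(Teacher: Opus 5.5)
Your proposal is correct and is essentially the paper's argument. The paper notes that the construction swaps the incident rulings, $\vecr_{n+1,L}=\vecr_{n,R}$ and $\vecr_{n+1,R}=\vecr_{n,L}$ (hence $\theta_{n+1,L}=\theta_{n,R}$ and $\theta_{n+1,R}=\theta_{n,L}$), and calls the lemma a direct consequence; the unstated step is exactly your observation that $\theta_L=\theta_R$ and $\theta_L+\theta_R=\pi$ are symmetric in $L$ and $R$.

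One small fix concerns orientation. It is preserved along $T$ because the new crease is regular, so $\tilde s_{n+1}'>0$ and $\tilde\vecx_{n+1}'\cdot\vect_n$ cannot change sign. The non-tangency assumption of Section~\ref{sec:comb_prelim} does not give you this: it only makes the ODE for $l(t)$ well posed and does not rule out cusps, e.g.\ where a tangent-parallel curve crosses the edge of regression of a tangent developable.
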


Next, with similar arguments to before, we show that: 

\begin{lemma}
If the crease-rule pattern $\mathcal{P}$ has a folded state, then $\tilde{\mathcal{P}}$ also has a folded state. 
Consequently, the existence of a rigid-ruling folding motion of $\mathcal{P}$ implies the existence of a rigid-ruling folding motion of $\tilde{\mathcal{P}}$.
\end{lemma}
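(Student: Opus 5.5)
The plan is to mirror the Combescure proof: start from a folded state $\mathcal{F}$ of $\mathcal{P}$ and build a folded state $\tilde{\mathcal{F}}$ of $\tilde{\mathcal{P}}$ by a parallel construction in 3D. The folded surfaces $\vecS_0,\dots,\vecS_{n,L}$ are kept. On the folded left patch $\vecS_{n,L}(t,u)=\vecX_n(t)+u\vecR_{n,L}(t)$ of the last crease, we locate a curve $\tilde\vecX_{n+1}(t)=\vecX_n(t)+l(t)\vecR_{n,L}(t)$ whose tangents are parallel to $\vecT_n(t)$, using the 3D analogue of the ODE of Section~\ref{sec:comb_prelim} with the same initial value $l_n(0)$. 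Because the development is an isometry, the 2D solution $l(t)$ also solves the 3D problem: tangency to the patch and the angle to the ruling are intrinsic. Hence $\tilde\vecX_{n+1}$ is exactly the folded image of $\tilde\vecx_{n+1}$.

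Across $\tilde\vecX_{n+1}$ we attach the new patch with rulings $\vecR_{n,R}(t)$, since in the pattern $\vecr_{n+1,L}=\vecr_{n,R}$. In 3D this patch is obtained by reflecting the configuration at $\vecX_n$: the new patch is a translate along $\vecX_n$ of $\vecS_{n,R}$, rescaled by the parallel relation.

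The next step is to verify the constraints of Definition~\ref{def:folded_state} for the new crease. The frames of $\tilde\vecX_{n+1}$ and $\vecX_n$ are parallel, so there is $p(t)>0$ with $\tilde s_{n+1}'=p\,s_n'$-type relations. This gives $\tilde s'_{n+1}\tilde K_{n+1}=s_n'K_n$, $\tilde s'_{n+1}\tilde\tau_{n+1}=s_n'\tau_n$, and likewise $\tilde s'_{n+1}\tilde k_{n+1}=s_n'k_n$ in the plane. The ruling angles swap roles: $\tilde\theta_{n+1,R}=\theta_{n,L}$ and $\tilde\theta_{n+1,L}=\theta_{n,R}$, up to the orientation conventions of Equation~\eqref{eqn:rulingangle1}. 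I would set $\tilde\varphi_{n+1}=-\varphi_n$, which matches $\vecP_{n+1,R}\parallel \vecP_{n,L}$.

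With these choices:
\begin{itemize}
\item Equation~\eqref{eqn:curv3} holds because $\cos$ is even.
\item The two developability equations~\eqref{eqn:dev3} for the new crease are the two equations for crease $n$ with $L\leftrightarrow R$ exchanged. The sign flips in $\sigma$ and in $\varphi$ compensate, since $\tan$ is odd and the $\tau$ term changes sign along with $\sigma$.
\item The ruling-curvature condition~\eqref{eqn:common3} across the shared patch $\vecS_{n,L}$ becomes $s_n'k_n\tan\varphi_n/\sin\theta_{n,L}=-\tilde s_{n+1}'\tilde k_{n+1}\tan\tilde\varphi_{n+1}/\sin\tilde\theta_{n+1,R}$. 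It is satisfied identically after substitution.
\end{itemize}
The boundary curve $\tilde\vecx_{n+2}$ is then obtained by propagating along the new developable, as in the general construction.

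I expect the main obstacle to be sign and orientation bookkeeping. The ruling angles $\theta_{n+1,R}$ measured for $\tilde\vecx_{n+1}$ must be related to $\theta_{n,L}$ exactly, including whether it is $\theta$ or $\pi-\theta$ given the left/right normal conventions. Together with the sign choice for $\tilde\varphi_{n+1}$, this determines whether the swapped equations line up. I would first check the conventions on the fully symmetric case, where $\tilde{\mathcal{P}}$ is locally the reflection of the crease $n$ neighbourhood.

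Finally, the construction depends continuously on the folded state. A continuous family of non-trivial folded states of $\mathcal{P}$ therefore yields one of $\tilde{\mathcal{P}}$, and $\tilde\varphi_{n+1}=-\varphi_n$ is not identically zero. This gives the rigid-ruling statement.
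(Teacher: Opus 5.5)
Your proposal is correct and follows the paper's proof. Both realize $\tilde{\mathcal{P}}$ in 3D via isometry, relate the new crease to crease $n$ through the parallelism factor $p(t)$ with swapped ruling angles, and then check the equations of Definition~\ref{def:folded_state} for the new crease and the new interior patch. Keep your choice $\tilde\varphi_{n+1}=-\varphi_n$ together with $\tilde s_{n+1}'\tilde K_{n+1}=s_n'K_n$: these are what make \eqref{eqn:dev3} and \eqref{eqn:common3} hold, whereas the paper's printed $\tilde\varphi_{n+1}=\varphi_n$ would force $\varphi_n\equiv 0$ in \eqref{eqn:common3} wherever $k_n\neq 0$, and its $\tilde K_{n+1}=p\,K_n$ should read $K_n/p$.
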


\begin{proof}
Similar to the previous section, we can use isometry to construct a 3D configuration corresponding to the crease-rule pattern $\tilde{\mathcal{P}}$. 

To show that this is a folded state, we confirm that the equations in Definition~\ref{def:folded_state} are satisfied. Specifically, it follows from the definition of $\tilde\vecx_{n+1}(t)$, that the curves $\vecx_n(t)$ and $\vecx_{n+1}(t)$ parametrization speeds $s_{n}'(t)$ and $\tilde s_{n+1}'(t)$, as well as their curvatures $k_n(t)$ and $\tilde k_{n+1}(t)$, differ by a factor $p(t)>0$, namely,
\begin{align*}
    s_{n+1}'(t) = p(t) s_{n}'(t) && \text{and} && k_{n+1}(t) = \frac{1}{p(t)} k_n(t), 
\end{align*}
and similarly, the quantities of the 3D curves relate by 
\begin{align*}
    \tilde K_{n+1}(t) = p(t) K_{n}(t) && \text{and} && \tilde \tau_{n+1}(t) = \frac{1}{p(t)} \tau_n(t). 
\end{align*}
Additionally, we have that $\theta_{n+1,L}(t) = \theta_{n,R}(t)$ and $\theta_{n+1,R}(t) = \theta_{n,L}(t)$.

We observe that if quantities associated with $\mathcal{P}$ satisfy Definition~\ref{def:folded_state}, so will the quantities associated with $\tilde{\mathcal{P}}$ with $\tilde\varphi_{i+1}(t) = \varphi_i(t)$.
\end{proof}
\section{Rigid-Ruling Folding Conditions for Crease-Rule Patterns}\label{sec:mainresult}

Given the over-constrained nature of the system discussed in Section~\ref{sec:multiplecrease}, a generic crease-rule pattern does not admit a folded state, and the existence of a rigid-ruling folding motion constitutes a special case. In this section, we present our novel results on the rigid-ruling foldability of regular crease-rule patterns.

\subsection{Local conditions for rigid-ruling foldability} 

 It is established that a regular PQ-mesh (not necessarily developable) is rigidly foldable if and only if every $(3 \times 3)$-submesh is rigidly foldable~\cite{schief2008integrability}. 
Similarly, for a crease-rule pattern with $n>$ creases, the question of whether it admits a rigid-ruling folding motion can be reduced to a sequence of smaller problems:

\begin{lemma}\label{lem:local}
In a regular crease-rule pattern, if each pair of adjacent creases and their incident surfaces can undergo a rigid-ruling folding motion, then the entire structure will likewise be capable of such a motion. 
\end{lemma}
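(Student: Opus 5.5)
The plan is to exploit the chain structure of the constraint system in Definition~\ref{def:folded_state}. The developability conditions~\eqref{eqn:dev3} and the curvature relations~\eqref{eqn:curv3} involve only one crease at a time. The ruling-curvature compatibility~\eqref{eqn:common3} couples only the consecutive creases $i$ and $i+1$.

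\textbf{Step 1: reduce each crease to one scalar.} For a single crease, \eqref{eqn:dev3} and \eqref{eqn:curv3} determine $K_i$ and $\tau_i$ from $\varphi_i$, and $\varphi_i$ itself up to one constant, via~\eqref{eqn:phi}:
$$\sin\varphi_i(t) = c_i e^{F_i(t)}, \qquad F_i(t)=\int_0^t f_i,$$
with $|c_i|\le c_{i,\max}$. A folded state of $\mathcal{P}$ is therefore a vector $(c_1,\dots,c_n)$ such that \eqref{eqn:common3} holds for every interior patch and all $t$. It is non-trivial if all $c_i\neq 0$.

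\textbf{Step 2: read the pair hypothesis as a transfer map.} Equation~\eqref{eqn:common3} is linear in $\tan\varphi_i$ and $\tan\varphi_{i+1}$, with a coefficient $\rho_i(t)$ depending only on the pattern. The candidate-pattern assumption keeps $\rho_i$ free of singularities at inflections. Since $\tan$ is injective on $(-\pi/2,\pi/2)$, $\varphi_i$ uniquely determines $\varphi_{i+1}$. Hence, for each $c_i$, there is at most one $c_{i+1}=g_i(c_i)$ compatible with it, namely the one read off at $t=0$, and $g_i$ is continuous.

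The hypothesis that the pair $(i,i+1)$ admits a rigid-ruling motion says that the identity
$$\tan\varphi_{i+1}(t)=\rho_i(t)\tan\varphi_i(t), \qquad c_{i+1}=g_i(c_i),$$
holds for all $t$ and for a continuum of values $c_i$.

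\textbf{Step 3 (main obstacle): make the valid parameter sets overlap.} The family of the pair $(1,2)$ produces some set of values $c_2$. The family of the pair $(2,3)$ is a priori valid on a possibly different set of $c_2$, and these must be made to intersect. My approach is analytic continuation in the parameter. Write
$$\tan\varphi_i=\frac{c_i e^{F_i}}{\sqrt{1-c_i^2e^{2F_i}}}.$$
For fixed $t$, the compatibility identity, after substituting $c_{i+1}=g_i(c_i)$, becomes a real-analytic relation in $c_i$. In fact $g_i$ is explicit and analytic on the admissible interval around $0$, and the relation becomes an algebraic identity after squaring. Since it holds on a continuum of $c_i$, it holds on the whole connected admissible interval around $c_i=0$.

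Thus each pair condition holds for all sufficiently small $c_i$, and $g_i(c_i)\to 0$ as $c_i\to 0$ with $g_i(c_i)\neq0$ for $c_i\neq 0$. The point to check carefully is that the admissible interval for $c_i$ is connected and contains $0$, which holds because $c_{i,\max}>0$, and that the square roots stay real on it.

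\textbf{Step 4: assemble.} Choose $c_1$ in a small interval $(0,\varepsilon)$ and define $c_{i+1}=g_i(c_i)$ recursively. By continuity and $g_i(0)=0$, shrinking $\varepsilon$ keeps every $c_i$ inside the admissible intervals of both pairs it belongs to. All of \eqref{eqn:dev3}, \eqref{eqn:curv3} and \eqref{eqn:common3} then hold, and no $c_i$ vanishes. The 3D creases and patches are built sequentially and fit together by Lemma~\ref{lem:rulcurv}. Varying $c_1$ continuously gives a continuous family of non-trivial folded states, which is the required rigid-ruling folding motion of the whole pattern.
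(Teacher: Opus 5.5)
Your proposal is correct and follows the same idea as the paper's proof, which is a brief sketch: each crease with its two incident patches is a one-parameter mechanism in $c_i$, each adjacent pair links consecutive parameters, and these links are chained near the flat state. Your Step 3 (extending each pair's compatibility to a whole neighborhood of $c_i=0$, which here reduces to a polynomial identity in $c_i^2$) makes rigorous the overlap of parameter ranges that the paper takes for granted by assuming each pair's family contains states near the planar configuration. One small fix: read off $c_{i+1}$ at a parameter where $k_i$ and $k_{i+1}$ are nonzero rather than at $t=0$, since $t=0$ could be an inflection point.
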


\begin{proof}
This deduction follows from the property that each pair of adjacent surfaces forms a one-degree-of-freedom mechanism with a family of folded states near the planar configuration. Hence, the ability to combine these mechanisms pairwise is sufficient to guarantee that the entire structure can be assembled.
\end{proof}

\subsection{Rigid-ruling folding compatibility of two creases}\label{sec:neccessary}

For a regular crease-rule pattern, Lemma~\ref{lem:local} implies that it suffices to examine pairs of rigid-ruling foldable creases. In this section, we present our main result, which extends previous work on the rigid-ruling compatibility of two planar creases~\cite{mundilova2024planar}, and establishes two conditions under which a crease-rule pattern $\mathcal{P}$ with two creases is rigidly foldable.
Throughout this section, we continue using the notation introduced in Section~\ref{sec:primer}.

\subsubsection{Preparations}

Recall from Section~\ref{sec:singlecrease}, that the folded states of a pair of patches joined along a curved crease $\vecx_i(t)$ are determined by their inclination angles 
\begin{equation}\label{eqn:phiI}
\varphi_i(t) = \arcsin\left(c_i e^{I_i(t)}\right) 
\end{equation}
for
\begin{equation}\label{eqn:defI}
I_i(t) = \frac{1}{2}\int^t_0 s_i'(\tilde t) k_i(\tilde t) \left(\cot \theta_{iL}(\tilde t) + \cot \theta_{iR}(\tilde t) \right)d\tilde t, 
\end{equation}
where $c_1$ and $c_2$ are two constants satisfying 
$$
\left|c_i \right|\leq c_{i,\max} := \min_{t\in T} e^{-I_i(t)}.
$$

We substitute the representation of the inclination angle from Equation~\eqref{eqn:phiI} into the compatibility condition between two patches with the same ruling curvature, given in Equation~\eqref{eqn:common3}. This yields
\begin{equation}\label{eqn:samerulcurve}
\frac{s_1'(t) k_1(t)}{\sin\theta_{1L}(t)} \tan\left(\arcsin\left(c_1 e^{I_1(t)}\right)\right)= - \frac{s_2'(t) k_2(t)}{\sin\theta_{2R}(t)} \tan\left(\arcsin\left(c_2 e^{I_2(t)}\right)\right).
\end{equation}
Using the trigonometric equality $\tan(\arcsin(x)) = x(1 - x^2)^{-\frac{1}{2}}$, we rewrite this condition as 
\begin{equation}\label{eqn:comp}
F_1(t) \frac{c_1 e^{I_1(t)}}{\sqrt{1 - c_1^2 e^{2 I_1(t)}}} =  - F_2(t)\frac{c_2 e^{I_2(t)}}{\sqrt{1 - c_2^2 e^{2 I_2(t)}}}
\end{equation}
where
\begin{equation*}
    F_1(t) := \frac{s_1'(t)k_1(t)}{\sin \theta_{1L}(t)}
    \quad 
    \text{and}
    \quad 
    F_2(t) := \frac{s_2'(t) k_2(t)}{\sin \theta_{2R}(t)}.
\end{equation*}

For a rigid-ruling folding motion to exist, we require that this equation is satisfied for a one-parameter family of pairs of initial values $(c_1, c_2)$, starting at the flat-configuration with $(c_1, c_2) = (0,0)$. In the following, we assume that $c_1, c_2 \neq 0$.

\subsubsection{Compatibility during folding motion}

With these preparations in place, we present the following theorem.

\begin{theorem}[Compatibility of creases]\label{thm:main}
A candidate crease-rule pattern has a rigid-ruling folding motion if and only if for all non-zero curvature parameters $t\in T \backslash T_0$, 
\begin{equation}\label{eqn:condA}
 \frac{F_1'(t)}{F_1(t)} - \frac{F_2'(t)}{F_2(t)} + I_1'(t) - I_2'(t) = 0
\end{equation}
and 
\begin{equation}\label{eqn:condB}
F_2(t)^2 I_1'(t) = F_1(t)^2 I_2'(t).
\end{equation}
are satisfied.
\end{theorem}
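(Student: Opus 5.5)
The plan is to eliminate the square roots in Equation~\eqref{eqn:comp} and reduce it to an equation that is affine in the reciprocals of the squared initial values. That turns ``holds for a one-parameter family of $(c_1,c_2)$'' into ``two functions of $t$ have a constant ratio'' plus one further constant-ratio identity.

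\textbf{Reduction.} Fix $t\in T\setminus T_0$, so that $F_1(t),F_2(t)\neq 0$, and assume $c_1,c_2\neq 0$. Squaring Equation~\eqref{eqn:comp} and taking reciprocals gives $(1-c_1^2e^{2I_1})/(F_1^2c_1^2e^{2I_1}) = (1-c_2^2e^{2I_2})/(F_2^2c_2^2e^{2I_2})$. With $a=c_1^{-2}$, $b=c_2^{-2}$, $G_i(t)=e^{-2I_i(t)}/F_i(t)^2$ this becomes
\begin{equation*}
a\,G_1(t) - b\,G_2(t) = \frac{1}{F_1(t)^2}-\frac{1}{F_2(t)^2}.
\end{equation*}
The unsquared equation additionally fixes the sign of $c_1c_2$ through the sign of $-F_1/F_2$. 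This sign is constant on each component of $T\setminus T_0$. Because the pattern is a candidate pattern, both $k_1$ and $k_2$ vanish at the same isolated points, so I expect the sign choice to propagate consistently by continuity. Checking this carefully is a technical point I would handle at the end.

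\textbf{Necessity.} Suppose a continuous non-trivial family $(a,b)$ satisfies the identity for all $t$. The family is non-constant and emanates from the flat state $(c_1,c_2)=(0,0)$, so $a,b\to\infty$ there. Take two distinct members and subtract: $\Delta a\,G_1=\Delta b\,G_2$. Hence $G_1=\lambda G_2$ with $\lambda$ constant; $\Delta a,\Delta b$ cannot both vanish, and $G_i\neq0$ off $T_0$. Taking the logarithmic derivative of $G_1/G_2$ gives $-2I_1'-2F_1'/F_1+2I_2'+2F_2'/F_2=0$, which is Equation~\eqref{eqn:condA}. Substituting back, $(a\lambda-b)G_2=1/F_1^2-1/F_2^2$. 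Since $G_2$ is not identically zero, $a\lambda-b=\kappa$ is the same constant for all members, so $\kappa G_2 = 1/F_1^2-1/F_2^2$. Differentiating this identity, replacing $\kappa G_2$ by the right-hand side, and eliminating $F_1'/F_1$ via Equation~\eqref{eqn:condA}: the $F_2'/F_2$ terms cancel and what remains is $I_1'/F_1^2=I_2'/F_2^2$. This is Equation~\eqref{eqn:condB}.

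\textbf{Sufficiency.} Reverse the argument. Equation~\eqref{eqn:condA} gives $G_1/G_2\equiv\lambda$ constant on each component of $T\setminus T_0$. Equations~\eqref{eqn:condA} and~\eqref{eqn:condB} together give $\frac{d}{dt}\bigl[(1/F_1^2-1/F_2^2)/G_2\bigr]=0$, so this quotient is a constant $\kappa$. Then the one-parameter family $b=\lambda a-\kappa$, for $a$ large and with $c_1=\pm a^{-1/2}$, $c_2$ of the forced sign, satisfies Equation~\eqref{eqn:comp} and tends to $(0,0)$ as $a\to\infty$. Small $|c_i|$ keep $|c_i|\le c_{i,\max}$. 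Equation~\eqref{eqn:comp} then holds on $T_0$ by continuity, and Lemma~\ref{lem:rulcurv} assembles the folded states.

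\textbf{Main obstacle.} I expect the main difficulty to be the behaviour across $T_0$. Two things must be shown: $\lambda$, $\kappa$ and the sign relation must agree on different components of $T\setminus T_0$, and the squaring step introduces no spurious solutions. My first attempt would be to express $\lambda$ and $\kappa$ through the continuous quantities $F_1/F_2$ and $e^{I_i}$, which extend continuously through the simultaneous zeros of $k_1,k_2$.
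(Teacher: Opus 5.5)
Your proposal is correct and is essentially the paper's argument written in the variables $a=c_1^{-2}$, $b=c_2^{-2}$. The paper's relation $c_2^2=A c_1^2/(Bc_1^2+C)$ becomes $b=(C/A)\,a+B/A$. Constancy of your $\lambda=C/A$ and $\kappa=-B/A$ is exactly the paper's pair of conditions $A'C-AC'=0$ and $A'B-AB'=0$. Your family $b=\lambda a-\kappa$ is the paper's $c_2^2=c_1^2c_3^2/(1-c_1^2c_4)$ with $\lambda=c_3^{-2}$ and $\kappa=c_4c_3^{-2}$; your uniform derivation only avoids the paper's case split on constant fold-angle creases.

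You defer the consistency of the constants and of the sign of $c_1c_2$ across $T_0$. The paper handles this only by its Step~3 remark that the compatibility condition is trivially satisfied where $k_1=k_2=0$, so on that point your proposal is at least as complete as the published proof.
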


\begin{proof}
Our proof consists of three steps. In the first step, we focus on parameter values $t \in T\backslash T_0$ and  derive two constraints that ensure Equation~\eqref{eqn:comp} is satisfied for a one-parameter family of values 
$c_1$ and $c_2$. In the second step, we show that these two constraints imply a smooth connection between the initial values $c_1$ and $c_2$. Finally, we argue the existence of a rigid-ruling folding motion by considering all parameter values $t\in T$.

\paragraph{Step 1 (Derivation of constraints).} In the following, we assume that $t\in T\backslash T_0$, and thus $F_i(t) \neq 0$.
Solving Equation~\eqref{eqn:comp} for the square of $c_2$ yields
\begin{align*}
c_2^2 &= \frac{A(t) c_1^2}{B(t) c_1^2 + C(t)},
\end{align*}
where
\begin{align*}
    A(t) &= F_1(t)^2 e^{2 I_1(t)}, 
    & B(t) &= \left(F_1(t)^2 - F_2(t)^2\right)e^{2 I_1(t)} e^{2 I_2(t)}, 
    & C(t) &= F_2(t)^2e^{2I_2(t)}.
\end{align*}
Note that the denominator cannot vanish for all $t$, which is equivalent to $B(t) = C(t) = 0$, since our assumptions imply that $C(t) \neq 0$.

The value of $c_2$ is constant in $t$, if and only if its differentiation w.r.t.\ parameter $t$ is zero, that is,
\begin{align*}
0 & = \frac{d}{dt} c_2^2 = 
\frac{
\left(A'(t) B(t) - A(t) B'(t)\right)c_1^2 +A'(t)C(t)
-A(t) C'(t)  }{( B(t)c_1^2 + C(t))^2}c_1^2.
\end{align*}
This equation must be satisfied for all suitable values of $c_1$. This is the case if and only if  the coefficients of $c_1^4$ and $c_1^2$ in the numerator vanish, leading to the following two constraints:
\begin{align}\label{eqn:eqn3}
0 &= A'(t) B(t)  - A(t) B'(t) & \text{and} && 
0 &= A'(t) C(t) - A(t) C'(t).
\end{align}

First, we simplify Equation~\eqref{eqn:eqn3} (right), which results in Equation~\eqref{eqn:condA}.
Next, we consider the equation which results from adding $e^{2I_1(t)}$ times the right equation of Equation~\eqref{eqn:eqn3} to its left one:
\begin{equation}
A'(t) \left(B(t) + e^{2I_1(t)} C(t)\right)
- A(t) \left(B'(t) + e^{2I_1(t)}C'(t)\right) = 0. 
\end{equation}
Simplifications of this equation result in Equation~\eqref{eqn:condB}.

Note that it follows from Equation~\eqref{eqn:condB} that $I_1'(t) = 0$ if any only if $I_2'(t) =0$. Since constant fold-angle creases $\vecx_i(t)$ are characterized by $I_i'(t) = 0$, it follows that if one crease has a constant fold-angle, for the crease-rule pattern to be rigid-ruling foldable, then the other must as well.

\paragraph{Step 2 (Smooth connection between initial values).} 
Next, we show that $c_2$ smoothly depends on $c_1$ and that there exists a motion that starts in a configuration where both values are zero. We continue assuming that $t\in T\backslash T_0$. 

\begin{itemize}
\item \emph{Constant fold-angle creases:} First, we consider the case where both creases have constant fold-angle, that is,  $I_1(t) = I_2(t) = 0$ for all $t\in T$. Integrating Equation~\eqref{eqn:condA}, we obtain 
$$
\frac{F_1(t)}{F_2(t)} = c_3,
$$
where $c_3 > 0$ is a constant determined by the geometry of the crease-rule pattern. 

To establish a relationship between $c_1$ and $c_2$, we consider Equation~\eqref{eqn:comp}, which simplifies to 
\begin{equation}\label{eqn:conn_const}
c_3 \frac{c_1}{\sqrt{1-c_1^2}} = - \frac{c_2}{\sqrt{1-c_2^2}}. 
\end{equation}
It follows that 
\begin{equation}\label{eqn:c2_const}
c_2^2 = \frac{c_1^2 c_3^2}{1-c_1^2(1-c_3^2)}. 
\end{equation}
Consequently, the constant $c_2$ is only real-valued if $1-c_1^2(1-c_3^2)>0$. This is the case either when $c_3^2 > 1$, or $c_3^2 < 1$ and $c_1^2 < 1/(1-c_3^2)$. 

In these two cases, back-substitution of the two solutions for $c_2$ of Equation~\eqref{eqn:c2_const} in Equation~\eqref{eqn:conn_const}, results in
$$
c_2 = - \frac{c_1c_3}{\sqrt{1-c_1^2(1-c_3^2)}}, 
$$
which shows a smooth relationship between the constants and thus implies the existence of a smooth folding motion that starts from the flat state. 

These consideration establish a maximal attainable fold-angle based on the crease-rule pattern.
Namely, if $c_3^2 < 1$, we have that $c_1 \in (-c_{1,\max},c_{1,\max})$ where $c_{1,\max} := 1 /\sqrt{1-c_3^2}$, otherwise, $c_1$ is not not constrained and a folding motion exists for all $c_1$.

\item \emph{Not constant fold-angle creases:} Next, we consider the case where $I_1'(t) = I_2'(t) = 0$ only for isolated values. 

Integrating Equation~\eqref{eqn:condA} yields 
\begin{equation*}
0 = I_1(t) - I_2(t) + \ln F_1(t) - \ln F_2(t) + \operatorname{const}, 
\end{equation*}
which implies 
\begin{equation}\label{eqn:condAint}
\frac{F_1(t) e^{I_1(t)}}{F_2(t) e^{I_2(t)}} = c_3,
\end{equation}
where $c_3 > 0$ is a constant that depends on the given crease-rule pattern. 

In the following, we assume $t\in T$ with $I_i'(t) \neq 0$, and consider the case where $I_i'(t) = 0$ futher below.
Combining Equation~\eqref{eqn:condAint} with Equation~\eqref{eqn:condB} results in
\begin{equation}\label{eqn:condBint}
I_1'(t)e^{2 I_1(t)} = c_3^2  I_2'(t)e^{2 I_2(t)},
\end{equation}
and another integration step yields
\begin{equation}\label{eqn:c4}
e^{2I_1(t)} = c_3^2 e^{2I_2(t)} + c_4,
\end{equation}
where $c_4$ is a constant that again is determined by the crease pattern, specifically, 
$$
c_4 = e^{2 I_1(t)}\left(1 - \frac{F_1(t)^2}{F_2(t)^2}\right).
$$

To find the connection between values of $c_1$ and $c_2$, we substitute the derived relations into the compatibility condition. Specifically, inserting Equation~\eqref{eqn:condAint} in Equation~\eqref{eqn:comp} simplifies to 
$$
c_3 = - \frac{c_2}{c_1}\frac{\sqrt{1-c_1^2(c_3^2 e^{2 I_2(t)} + c_4)}}{\sqrt{1-c_2^2 e^{2 I_2(t)}}},
$$
and solving for $c_2^2$ yields
\begin{align}\label{eqn:constconn}
c_2^2 = \frac{c_1^2 c_3^2}{1- c_1^2 c_4}.
\end{align} 
Again, $c_2$ is real-valued, if $c_4< 0$, or $c_4 > 0$ and $c_1^2 < 1/c_4$. 
If this is the case, we have that 
$$
c_2 = - \frac{c_1c_3}{\sqrt{1-c_1^2 c_4}} 
$$ 
is a smooth relationship between the constants $c_1$ and $c_2$, and hence there exists a smooth folding motion.

We extend these considerations for isolated values where $I_1(t) = I_2(t) = 0$, and note that Equation~\eqref{eqn:c4} implies that $c_4 = 1 - c_3^2$, which establishes the connection to the constant fold-angle case. 
\end{itemize}

This concludes our argumentation that the two constraints are also sufficient for the existence of a rigid-ruling motion for values of $t$ that do not correspond to inflection points of the creases in the pattern.

\paragraph{Step 3 (Parameter values with zero curvature).} Finally, we extend our considerations to all parameter-values $t\in T$. It follows from Equation~\eqref{eqn:comp} that at parameter values where $k_1(t) = k_2(t) = 0$, the compatibility condition is trivially satisfied and does not pose a constraint on the connection on $c_1$ and $c_2$. Their values are therefore determined solely by the curved region. 
This concludes the proof.

\end{proof}

For the classification of combinations of planar and constant fold-angle creases in Section~\ref{sec:characterization}, it is more convenient to work with the integrated forms of the constraints in Theorem~\ref{thm:main} rather than the constraints themselves. We therefore note the following:

\begin{corollary}\label{cor:main}
  A candidate crease-rule pattern has a rigid-ruling folding motion if and only if for all non-zero curvature parameters $t\in T \backslash T_0$, Equation~\eqref{eqn:condAint} and Equation~\eqref{eqn:condBint} for some $c_3 > 0$ and $c_4$ are satisfied.
\end{corollary}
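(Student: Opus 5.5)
The plan is to show that, on each connected subinterval of $T\backslash T_0$, the pair of differential conditions \eqref{eqn:condA}--\eqref{eqn:condB} is equivalent to the pair of integrated conditions \eqref{eqn:condAint}--\eqref{eqn:condBint} for suitable constants $c_3>0$ and $c_4$. Once this is established, the corollary follows directly from Theorem~\ref{thm:main}.

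\emph{Forward direction.} On $T\backslash T_0$ we have $F_i(t)\neq 0$. Since $\theta$ lies in $(0,\pi)$ and $s'>0$, the sign of $F_i$ is the sign of $k_i$. Condition \eqref{eqn:condA} states that
\[
\frac{d}{dt}\Bigl(\ln|F_1(t)|+I_1(t)-\ln|F_2(t)|-I_2(t)\Bigr)=0 .
\]
Hence $F_1e^{I_1}/(F_2e^{I_2})$ is constant on each such interval. This is \eqref{eqn:condAint}, with $c_3$ taken as the absolute value of that constant, after fixing the orientation and sign conventions as the paper does in Step~2 of the proof of Theorem~\ref{thm:main}. Squaring gives $F_1^2=c_3^2F_2^2e^{2I_2-2I_1}$. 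Substituting this into \eqref{eqn:condB} and cancelling the nonzero factor $F_2^2e^{-2I_1}$ yields \eqref{eqn:condBint}.

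\emph{Converse.} Differentiating the logarithm of \eqref{eqn:condAint} recovers \eqref{eqn:condA}. Substituting $c_3^2=F_1^2e^{2I_1}/(F_2^2e^{2I_2})$ into \eqref{eqn:condBint} and multiplying by the nonzero factor $F_2^2e^{-2I_1}$ gives back \eqref{eqn:condB}. The constant $c_4$ plays no role in these two steps. It is the integration constant in \eqref{eqn:c4}, obtained by integrating \eqref{eqn:condBint}. It is mentioned in the statement only because it appears in the integrated relations used in Section~\ref{sec:characterization}. Any value of $c_4$ consistent with \eqref{eqn:c4} is admissible.

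\emph{Main obstacle.} The delicate point is that $c_3$ must be a single constant on all of $T\backslash T_0$, not merely constant on each component between isolated inflection parameters. I would argue this using the candidate-pattern assumption together with Step~3 of the proof of Theorem~\ref{thm:main}. Both $F_1$ and $F_2$ vanish at points of $T_0$, and both contain the factor $k_1\sim k_2$ there. Continuity of the relation $c_2=-c_1c_3/\sqrt{1-c_1^2c_4}$, which links the same global folding constants $c_1,c_2$ on either side of such a point, then forces $c_3$ and $c_4$ to agree across the point. Conversely, when the integrated conditions hold with common constants, the proof of Theorem~\ref{thm:main} directly yields the motion. I expect this patching argument across $T_0$ to be the only step that is not purely mechanical.
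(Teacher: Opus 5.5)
Your argument is correct and is essentially the paper's: integrating \eqref{eqn:condA} to \eqref{eqn:condAint} and substituting that into \eqref{eqn:condB} to get \eqref{eqn:condBint} is exactly Step~2 of the proof of Theorem~\ref{thm:main}. Your extra point, that the constants must agree across points of $T_0$, is a genuine refinement the paper passes over; it holds because the same global pair $(c_1,c_2)$ must satisfy $c_2^2=c_1^2c_3^2/(1-c_1^2c_4)$ on every component for a whole family of $c_1$, not because of any $k_1\sim k_2$ comparison. The one step to discard is replacing the integrated constant by its absolute value: when $k_1k_2<0$ on a component that constant is negative, and no orientation convention changes the sign of $k_1/k_2$, so $c_3>0$ in \eqref{eqn:condAint} is a tacit assumption you share with the paper rather than something your argument establishes.
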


We highlight the following property of constant fold-angle creases:

\begin{lemma}\label{lem:allconstantfoldangle}
In a candidate crease-rule pattern that admits a rigid-ruling motion and contains at least one constant fold-angle crease, all creases are constant fold-angle creases.
\end{lemma}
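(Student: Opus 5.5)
The plan is to reduce the statement to adjacent pairs of creases and then propagate along the sequence. Suppose the pattern has creases $\vecx_1(t),\dots,\vecx_n(t)$ and admits a rigid-ruling motion, and suppose $\vecx_j(t)$ is a constant fold-angle crease, i.e.\ $I_j'(t)=0$ for all $t$ (equivalently $\cot\theta_{jL}+\cot\theta_{jR}=0$ wherever $k_j\neq 0$).

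The first step is to show that for any adjacent pair $\vecx_i(t),\vecx_{i+1}(t)$, the sub-pattern consisting of these two creases and their incident patches also has a rigid-ruling motion. This holds because a continuous family of non-trivial folded states of the whole pattern restricts to one of the sub-pattern. Definition~\ref{def:folded_state} only involves Equation~\eqref{eqn:dev3} and Equation~\eqref{eqn:curv3} per crease and Equation~\eqref{eqn:common3} per interior patch, so the restricted functions still satisfy them. The sub-pattern is also a candidate pattern: inflection parameters of $\vecx_i$ are those of $\vecx_1$ by the candidate assumption. Strictly, the definition uses $\vecx_1$'s zeros as $T_0$; I will take $T_0$ for the pair as the common zero set, which agrees off isolated points.

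The second step applies Theorem~\ref{thm:main} to this pair, relabelled as creases $1,2$. Condition~\eqref{eqn:condB} gives $F_{i+1}(t)^2 I_i'(t)=F_i(t)^2 I_{i+1}'(t)$ for $t\in T\setminus T_0$, where $F_i,F_{i+1}\neq 0$. Hence $I_i'(t)=0$ if and only if $I_{i+1}'(t)=0$ on $T\setminus T_0$. This is precisely the remark already made at the end of Step~1 of the proof of Theorem~\ref{thm:main}. Since $T_0$ consists of isolated points and $I'$ is continuous (the integrand $s'k(\cot\theta_L+\cot\theta_R)$ is continuous, as rulings are never tangent to the creases), vanishing of $I_{i+1}'$ off $T_0$ extends to all of $T$. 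So $\vecx_{i+1}$ is a constant fold-angle crease whenever $\vecx_i$ is, and conversely.

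The final step is induction on the index distance from $j$ in both directions, which gives that all creases have constant fold-angle. The only delicate point is the first step: identifying the pairwise indices correctly, since $F_1$ uses $\theta_{1L}$ and $F_2$ uses $\theta_{2R}$ for the shared patch, and making sure restriction preserves non-triviality, i.e.\ that no $\varphi_i$ is identically zero. Straight creases, where $k_i\equiv 0$, are constant fold-angle by convention. The candidate assumption ensures $k_i$ vanishes only at isolated points unless it vanishes for all creases, in which case all creases are straight and the claim is trivial.
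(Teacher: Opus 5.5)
Your proposal is correct and takes essentially the same approach as the paper. Both arguments rest on the pairwise equivalence $I_i'(t)=0 \Leftrightarrow I_{i+1}'(t)=0$ at non-zero-curvature parameters, which you read off Equation~\eqref{eqn:condB} and the paper cites via the integrated relation Equation~\eqref{eqn:c4}, extended across the isolated zero-curvature parameters by continuity; your restriction to adjacent pairs and the induction only make explicit what the paper leaves implicit. One small slip is harmless: the candidate assumption gives only that zeros of $k_1$ are zeros of $k_i$, but in a non-trivial folded state $\varphi_i$ never vanishes, so Equation~\eqref{eqn:common3} forces $k_i$ and $k_{i+1}$ to vanish at the same parameters, which justifies your choice of $T_0$ for each pair.
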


\begin{proof}
For points with non-vanishing curvature, this statement follows directly from Equation~\eqref{eqn:c4}. On non-empty intervals where the curvature vanishes, the adjacent creases maintain a constant fold angle as they are folded along a straight line. For isolated parameter values with constant fold angle, Equation~\eqref{eqn:c4} does not apply. Nevertheless, the constant fold-angle property extends from one crease to the next due to the assumed continuity of the creases and ruling directions. 
\end{proof}

\section{Rigid-Ruling Foldable Crease-Rule Patterns}

Theorem~\ref{thm:main} and Corollary~\ref{cor:main} provide tools for determining whether a candidate crease-rule pattern admits a rigid-ruling folding motion. However, applying these results directly to the design of such patterns is less straightforward. In this section, we present two applications of the previous results that are suitable for design and gaining a better understanding.

\subsection{Preliminaries}\label{sec:notation_and_parametrization}

The two applications presented in this section focus on rigid-ruling foldable patterns composed of three patches. In the first application, we begin with a crease-rule pattern consisting of a single crease and append a patch along a curved crease that satisfies rigid-ruling foldability. In the second, we classify pairs of special crease types that form rigid-ruling foldable combinations.

In both applications, the approach relies on a case analysis of the central patch according to the different types of developables. Recall that the rulings of a developable surface are either locally parallel, concurrent at a point, or tangent to a space curve. Consequently, any developable surface can be decomposed into cylindrical, conical, or tangent developable patches.

Following the approach of Sauer~\cite{sauer1970differenzengeometrie}, we simplify the case analysis by employing a Combescure transformation of crease-rule patterns to convert central tangent developable patches into conical ones. Since Combescure transformations preserve the folding properties of crease-rule patterns, this simplification does not affect the analysis.

As preparation, we set up parametrizations of crease-rule patterns whose central patch is either cylindrical or conical, and express the quantities appearing in Theorem~\ref{thm:main} and Corollary~\ref{cor:main}. We continue to employ the notation of Section~\ref{sec:primer}, but simplify the common ruling direction $\vecr_{1L}(t) = \vecr_{2R}(t)$ by $\vecr(t)$.

\paragraph{Cylinders.}
In the case where the central patch is a cylinder, we assume, without loss of generality, that
\begin{align*}
\vecx_i(t) = (t,0) + l_i(t) \vecr(t) && \text{ for } && \vecr(t) = (0,1),
\end{align*}
where $l_i(t)$ are two initially unknown $C^2$ length functions such that $l_1(t) < l_2(t)$.

Using the descriptions in Section~\ref{sec:development}, we compute 
\begin{align*}
s_i'(t) k_i(t) = \frac{l_i''(t)}{1 + l_i'(t)^2}. 
\end{align*}
The ruling angles of the central patch simplify to
\begin{align*}
\theta_{1L}(t) = \arccos\left(\frac{l_1'(t)}{\sqrt{1 + l_1'(t)^2}}\right)
&& \text{and} && 
\theta_{2R}(t) = \arccos\left(\frac{l_2'(t)}{\sqrt{1 + l_2'(t)^2}}\right), 
\end{align*}
and we have that
\begin{align}
F_i(t) &= \frac{l_i''(t)}{\sqrt{1+l_i'(t)^2}}, \nonumber\\
I_1'(t) &= \frac{\left( \cot \theta_{1R}(t)+l_1'(t)\right) l_1''(t)}{2\left(1 + l_1'(t)^2\right)},\label{eqn:exprCyl}
\\
I_2'(t) &= \frac{\left( \cot \theta_{2L}(t)+l_2'(t)\right) l_2''(t)}{2\left(1 + l_2'(t)^2\right)}. \nonumber
\end{align}

If $\vecx_i(t)$ is a planar crease, then the expressions in Equation~\eqref{eqn:defI} simplify to 
\begin{align}
I_i(t) = \frac{1}{2}\log\left(1 + l_i'(t)^2\right) &&
\text{and}&&
\varphi_i(t) = \arcsin\left(c_i \sqrt{1 + l_i'(t)^2}\right), 
\end{align}
where $c_1$ and $c_2$ are appropriate constants.

\paragraph{Cones.}
In the case where the central patch is a cone, we assume, without loss of generality, that
\begin{align*}
\vecx_i(t) = \vecv + l_i(t) \left(\cos t,-\sin t\right), 
&& \text{where} && \vecr(t) =  \left(\cos t,-\sin t\right), 
\end{align*} 
and
 $l_i(t)$ are two initially unknown $C^2$ length functions such that  $l_1(t) <l_2(t)$.

Using the description in Section~\ref{sec:development}, we compute 
\begin{align*}
s_i'(t) k_i(t) = -\frac{f_i(t)}{e_i(t)}, 
\end{align*}
where 
\begin{align*}
e_i(t) &= l_i(t)^2 + l_i'(t) ^2, \\
f_i(t) &= l_i(t)^2 + 2 l_i'(t)^2 - l_i(t) l_i''(t).
\end{align*}
The ruling angles of the central patch simplify to 
\begin{align*}
\theta_{1L}(t) =  \arccos\left(\frac{l_1'(t)}{\sqrt{e_1(t)}}\right) 
&& \text{and} && 
\theta_{2R}(t) =  \arccos\left(\frac{l_2'(t)}{\sqrt{e_2(t)}}\right), 
\end{align*}
and we have that
\begin{align}
    F_i(t) &= -\frac{f_i(t)}{l_i(t) \sqrt{e_i(t)}}, \nonumber\\
    I_1'(t) &= \frac{1}{2}\left(l_1'(t) + l_1(t) \cot \theta_{1R}(t)\right)\frac{f_1(t)}{l_1(t)e_1(t)}, \label{eqn:exprCone}\\
    I_2'(t) &= \frac{1}{2}\left(l_2'(t) + l_2(t) \cot \theta_{2L}(t)\right)\frac{f_2(t)}{l_2(t)e_2(t)}. \nonumber
\end{align}

If $\vecx_i(t)$ is a planar crease, the expression in Equation~\eqref{eqn:defI} simplify to 
\begin{align*}
I_i(t) = \frac{1}{2} \log \left(\frac{e_i(t)}{l_i(t)^4}\right) 
&& \text{and} && 
\varphi_i(t) = \arcsin\left(c_i \frac{\sqrt{e_i(t)}}{l_i^2(t)}
\right),
\end{align*}
where $c_1$ and $c_2$ are appropriate constants.

Finally, note that the solutions to the differential equation $f_i(t) = 0$ are of the form
$$
l_i(t) = \frac{c_2}{\cos(t + c_1)}, 
$$
and correspond to straight lines on a cone.

\subsection{Sequential construction of rigid-ruling foldable crease-rule patterns}\label{sec:appending}

Appending a crease to a folded structure is a useful technique for increasing design complexity with additional pleats~\cite{alese2022propagation, mundilovaphd}. In general, for a given a developable patch, almost any suitable incident curve can become a curved crease, with the appended patch being found as the envelope of tangent planes reflected across the curve’s osculating plane. However, if the original surface is part of a rigidly foldable structure, in general, appending a new patch causes the structure to lose this property.

In this section, we discuss how to append a patch along a curved crease to a crease-rule pattern while preserving its rigid-ruling foldability, thereby providing a method for generating rigid-ruling foldable crease-rule patterns. Although a special case of such curves, those that have parallel tangents, was presented in Section~\ref{sec:parrul}, we show that in general these curves and corresponding patches can be specified using three parameters.

In the following, we consider the three cases for the central patch $\vecs_{1,L}(t,u)$ (cylindrical, conical, or tangent developable) separately, and show how to find a crease $\vecx_2(t)$ and the rulings of the incident surface $\vecs_{2,L}(t,u)$ such that the combined crease--rule pattern has a rigid-ruling folding motion.

\subsubsection{Appending a patch to a central cylinder}

In this section, we follow the notation of Section~\ref{sec:notation_and_parametrization} for the cylindrical case and assume that the crease $\vecx_1(t)$, defined by the length function $l_1(t)$, is not a straight line, implying $l_1'(t) \neq 0$. Using Theorem~\ref{thm:main}, we aim to find $\vecx_2(t)$, characterized by $l_2(t)$ and the rulings of the third patch, $\theta_{2L}(t)$.

First, we consider the second constraint in Theorem~\ref{thm:main} and observe that substituting  the expressions from Equation~\eqref{eqn:exprCyl} into Equation~\eqref{eqn:condB} results in 
\begin{equation}\label{eqn:theta2Lcyl}
\cot \theta_{2L}(t) = \frac{l_2''(t)}{l_1''(t)}\left(\cot \theta_{1R}(t) + l_1'(t)\right) -l_2'(t)
\end{equation}

Next, we consider the first constraint in Theorem~\ref{thm:main}.  Using Equation~\eqref{eqn:condA}, we simplify the following expression
$$
\frac{F_i'(t)}{F_i(t)} = 
-\frac{l_i'(t) l_i''(t)}{1+l_i'(t)^2} + \frac{l_i'''(t)}{l_i''(t)}.
$$
Using this expression and Equation~\eqref{eqn:theta2Lcyl},  Equation~\eqref{eqn:condA} simplifies to 
\begin{equation*}
\begin{split}
l_2''' = \frac{l_2''}{2}
&\left(
\frac{(\cot \theta_{1R} - l_1') l_1''}{1+l_1'^2} 
- \frac{(\cot \theta_{1R} +l_1')l_2''^2}{l_1''(1+l_2'^2)}
+ \frac{2 l_2' l_2''}{1+l_2'^2} 
+ \frac{2 l_1'''}{l_1''}
\right).
\end{split}
\end{equation*}
Note that this is an explicit, non-linear third-order differential equation for the length function $l_2(t)$.
Consequently, a local solution generally depends on three parameters, corresponding to the initial values of $l_2(t)$, $l_2'(t)$, and $l_2''(t)$. 
Upon successful computation, this determines $\vecx_2(t)$ and, using Equation~\eqref{eqn:theta2Lcyl}, the ruling directions of its left patch. We conclude:

\begin{corollary}
    For a given crease incident to a cylinder, there is, in general, a three-parameter family of curves on the cylinder and incident rulings that result in a crease-rule pattern with a rigid-ruling folding motion.
\end{corollary}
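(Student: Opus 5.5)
The plan is to read the corollary directly off Theorem~\ref{thm:main}, using the reduction carried out just above. In the cylindrical parametrization, the given data are $l_1(t)$ and $\theta_{1R}(t)$, while the unknowns are $l_2(t)$ and $\theta_{2L}(t)$. By the theorem, the pattern admits a rigid-ruling folding motion if and only if Equations~\eqref{eqn:condA} and~\eqref{eqn:condB} hold for $t\in T\backslash T_0$. So it suffices to show that the solution set of these two equations in the unknowns $(l_2,\theta_{2L})$ is locally a three-parameter family.

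\emph{Step 1.} Substitute the expressions from Equation~\eqref{eqn:exprCyl} into Equation~\eqref{eqn:condB}. The unknown $\theta_{2L}$ enters only linearly, through $\cot\theta_{2L}$ inside $I_2'$. Where $l_1''\neq0$ and $l_2''\neq 0$, one can therefore solve algebraically and obtain Equation~\eqref{eqn:theta2Lcyl}. Hence $\theta_{2L}$ is uniquely determined by $l_2$ and its first two derivatives, and contributes no free parameters.

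\emph{Step 2.} Substitute Equation~\eqref{eqn:theta2Lcyl} and the formula for $F_i'/F_i$ into Equation~\eqref{eqn:condA}. Since $F_2'/F_2$ is the only term containing $l_2'''$, and it does so linearly with coefficient $1/l_2''$, the equation can be solved explicitly for $l_2'''$ as a function of $t$, $l_2'$, and $l_2''$, with coefficients built from the continuous data $l_1,l_1',l_1'',l_1''',\theta_{1R}$. This is the explicit third-order ODE displayed above.

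\emph{Step 3.} Apply the Picard--Lindelöf theorem. The right-hand side is continuous in $t$ and locally Lipschitz in $(l_2',l_2'')$ on the open set $\{l_2''\neq0\}$, given $l_1''\neq0$ on the interval considered. Consequently, every admissible triple of initial values $(l_2(0),l_2'(0),l_2''(0))$ with $l_2''(0)\neq0$ yields a unique local solution. Equation~\eqref{eqn:theta2Lcyl} then recovers the rulings of the new patch. Conversely, any pattern satisfying the conditions of Theorem~\ref{thm:main} must satisfy this ODE, so the family is exactly three-dimensional. The qualifier ``in general'' absorbs the genericity side-conditions, namely $l_2''\neq0$, $l_1''\neq0$ (excluding $T_0$), $l_1<l_2$, and regularity of the resulting pattern (for instance, $\theta_{2L}\notin\{0,\pi\}$ and no singularities of the new patch).

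\emph{Main obstacle.} The delicate part is justifying that the two conditions, restricted to the cylinder case, are \emph{equivalent} to the ODE together with Equation~\eqref{eqn:theta2Lcyl}. This requires dividing by $l_1''$ and $l_2''$, which is exactly why the statement is only ``in general''. I would handle it by restricting to a subinterval on which these quantities are nonzero. Because the theorem's conditions are local, the inflection points in $T_0$ impose no constraint, as shown in Step 3 of the proof of Theorem~\ref{thm:main}.
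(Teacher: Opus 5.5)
Your proposal is correct and follows the paper's argument. You solve \eqref{eqn:condB} for $\cot\theta_{2L}$ to get Equation~\eqref{eqn:theta2Lcyl}, substitute this into \eqref{eqn:condA} to obtain an explicit third-order ODE for $l_2$, and count the free initial values $l_2(0),l_2'(0),l_2''(0)$; your Picard--Lindel\"of step and the genericity side-conditions only make explicit what the paper leaves implicit. (Minor remark: Step 3 does not need the restriction to $\{l_2''\neq0\}$ for the Lipschitz property, because the ODE has the form $l_2'''=l_2''\,G(t,l_2',l_2'')$ with $G$ free of $1/l_2''$; this form also shows that $l_2''$ cannot vanish along a solution with $l_2''(0)\neq0$.)
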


\subsubsection{Appending a patch to a central cone}\label{sec:appl1Cone}
Next, we consider the case of the central patch being a cone. 
We follow the notation of Section~\ref{sec:notation_and_parametrization} for the conical case and assume that the crease $\vecx_1(t)$, defined by the length function $l_1(t)$, is not a straight line, implying $f_1(t) \neq 0$. Again, using Theorem~\ref{thm:main}, we aim to find $\vecx_2(t)$, characterized by $l_2(t)$ and the rulings of the third patch, $\theta_{2L}(t)$.

Again, we first consider the second constraint in Theorem~\ref{thm:main} and observe that substituting the expressions from Equation~\eqref{eqn:exprCone} in Equation~\eqref{eqn:condB} results in 
\begin{equation}\label{eqn:theta2Lcone}
\cot \theta_{2L} = 
\frac{1}{l_2^2 }\left( - l_2l_2' + 
\frac{f_2}{f_1}\left(l_1l_1' + l_1^2 \cot \theta_{1R}\right)\right)
\end{equation}

Next, we consider the first constraint in Theorem~\ref{thm:main}.  Using Equation~\eqref{eqn:condA}, we simplify the following expression
$$
\frac{F_i'(t)}{F_i(t)} = \frac{-2 l_i'^5 + 4 l_i^3 l_i' l_i'' + 2 l_i l_i'^3 l_i'' - l_i^4 l_i''' + l_i^2 l_i'(l_i''^2 - l_i'(3 l_i' + l_i'''))}{l_i(l_i^2 + l_i'^2)(l_i^2 + 2 l_i'^2 -l_i l_i'')}
$$
Using this expression and Equation~\eqref{eqn:theta2Lcyl},  Equation~\eqref{eqn:condA} simplifies to 
\begin{equation*}
\begin{split}
    l_2''' = \frac{f_2}{2 l_2}
& %
    \left( \frac{2l_1'}{l_1} - \frac{2l_2'}{l_2} %
    + (l_1' + l_1 \cot \theta_{1R})\left(\frac{f_1}{l_1 e_1} - \frac{l_1 f_2^2}{l_2^2 e_2 f_1}\right) 
    +\frac{2l_1' (l_1 + l_1'')}{e_1}  
    -\frac{2l_2' (l_2 + l_2'')}{e_2}\right.%
    \\ %
    & \left. 
    +  \frac{-2l_1'(2 l_1 + 3 l_1'')+ 2 l_1 l_1'''}{f_1}  
+ \frac{4l_2l_2' + 6 l_2' l_2''}{f_2}
    \right).
\end{split}
\end{equation*}

Note that this is again an explicit, non-linear third-order differential equation for the length function $l_2(t)$.
Consequently, a local solution generally depends on three parameters, corresponding to the initial values of $l_2(t)$, $l_2'(t)$, and $l_2''(t)$. 
Upon successful computation, this determines $\vecx_2(t)$ and, using Equation~\eqref{eqn:theta2Lcone}, the ruling directions of its left patch. We conclude:

\begin{corollary}
    For a given crease incident to a cone, there is, in general, a three-parameter family of curves on the cone and incident rulings that result in a crease-rule pattern with a rigid-ruling folding motion.
\end{corollary}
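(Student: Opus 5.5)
The corollary for the conical central patch follows from the computation just carried out, in the same way as the cylindrical case. I will read Theorem~\ref{thm:main} as a system of two equations in the two unknown functions $l_2(t)$ and $\theta_{2L}(t)$. The data $l_1(t)$ and $\theta_{1R}(t)$ are prescribed, and the central cone is fixed by the apex $\vecv$ and the rulings $\vecr(t)=(\cos t,-\sin t)$.

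The first step uses Equation~\eqref{eqn:condB}. Inserting the conical expressions of Equation~\eqref{eqn:exprCone} for $F_i$ and $I_i'$ gives an equation that is linear in $\cot\theta_{2L}$. Solving it yields Equation~\eqref{eqn:theta2Lcone}, which expresses $\theta_{2L}$ algebraically through $l_2,l_2',l_2''$ and the data of the first crease. This requires $f_1\neq 0$, which holds because $\vecx_1$ is not a straight line on the cone, and $l_2\neq 0$, which holds because the crease avoids the apex.

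The second step substitutes this expression into Equation~\eqref{eqn:condA}, using the formula for $F_i'/F_i$ derived above. The only third derivative of $l_2$ appears in $F_2'/F_2$, and its coefficient is $-l_2^4/(l_2 e_2 f_2)$. This coefficient is nonzero as long as $f_2\neq 0$, that is, as long as $\vecx_2$ is not a straight line and has nonvanishing curvature. We can therefore solve for $l_2'''$ and obtain the explicit third-order ODE displayed above. The coefficients of this ODE are continuous wherever $l_2>0$, $e_2>0$ and $f_2\neq 0$.

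The third step applies the Picard--Lindel\"of (or Peano) theorem. For each admissible choice of initial values $(l_2(0),l_2'(0),l_2''(0))$ with $l_1(0)<l_2(0)$ and $f_2(0)\neq 0$, there is a local solution $l_2(t)$. This gives a three-parameter family of curves. For each member, Equation~\eqref{eqn:theta2Lcone} determines the rulings of the appended patch, and Theorem~\ref{thm:main} then guarantees a rigid-ruling folding motion.

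The main obstacle is justifying the words ``in general''. Two things must be checked. First, the resulting pattern must remain regular and a candidate pattern: the new rulings must not become tangent to the crease, and the new patch must have no singularities. Second, the curvature $k_2$ must not vanish, since we need $f_2\neq 0$ along the solution. I would argue that each of these is an open condition on the initial data, so it holds locally near generic initial values. Restricting $T$ if necessary, the family is therefore genuinely three-dimensional.
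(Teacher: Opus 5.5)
Your proposal is correct and follows the paper's argument step by step: solve Equation~\eqref{eqn:condB} for $\cot\theta_{2L}$ (giving Equation~\eqref{eqn:theta2Lcone}), substitute into Equation~\eqref{eqn:condA} to get an explicit third-order ODE for $l_2$, and read off the three parameters $l_2(0), l_2'(0), l_2''(0)$, with your open-condition remarks only making the paper's ``in general'' a bit more explicit. One harmless slip is that the coefficient of $l_2'''$ in $F_2'/F_2$ is $-(l_2^4+l_2^2l_2'^2)/(l_2e_2f_2) = -l_2/f_2$, not $-l_2^4/(l_2e_2f_2)$, because you dropped the $-l_2^2l_2'^2l_2'''$ term; the correct value matches the prefactor $f_2/(2l_2)$ in the paper's ODE and is nonzero under exactly the same conditions, so the argument is unaffected.
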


\subsubsection{Appending a patch to a central tangent developable}

Finally, we briefly discuss the case where the central surface is a tangent developable.
As highlighted in Section~\ref{sec:notation_and_parametrization}, this case can be locally reduced to a conical case discussed in Section~\ref{sec:appl1Cone}. Specifically, we can transform the crease-rule pattern into a configuration where the surface $\tilde\vecs_{1L}(t,u)$ corresponding to $\vecs_{1L}(t,u)$ is a cone using parallelism discussed in Section~\ref{sec:parcurves}. With the method from  Section~\ref{sec:appl1Cone}, we identify the curves on $\tilde\vecs_{1L}(t,u)$ that have a rigid ruling folding motion. 
These compatible curves can be transformed into curves on $\vecs_{1L}(t,u)$, resulting in crease-rule patterns with general surfaces that have a rigid-ruling folding motion. 

 Note that no additional curves $\vecx_2(t)$ can exist, as every compatible curve on $\vecs_{1L}(t,u)$ would correspond to an unclaimed curve on 
 $\tilde \vecs_{1L}(t,u)$.  Similar to before we conclude:

 \begin{corollary}
    For a given crease incident to a tangent developable patch, there is, in general, a three-parameter family of curves on the tangent developable patch and incident rulings that result in a crease-rule pattern with a rigid-ruling folding motion.
\end{corollary}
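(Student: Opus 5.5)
We reduce the tangent developable case to the conical case of Section~\ref{sec:appl1Cone} with a Combescure transformation (Section~\ref{sec:parcurves}), and transfer the three-parameter family back.

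\emph{Construction of the transform.} Let the central patch $\vecs_{1,L}(t,u)$ be a tangent developable, so its rulings $\vecr(t)$ are tangent to a planar edge of regression and are not locally parallel. Choose a point $\vecv$ and use the construction of Section~\ref{sec:parcurves} to build a Combescure-transformed pattern $\tilde{\mathcal{P}}$ with the same rulings. In it, the image $\tilde\vecx_1(t)$ of the crease has tangents parallel to $\vecx_1(t)$, and the central patch $\tilde\vecs_{1,L}(t,u) = \tilde\vecx_1(t) + u\,\vecr(t)$ is the cone with apex $\vecv$. Such a curve $\tilde\vecx_1(t) = \vecv + \tilde l_1(t)\vecr(t)$ is obtained by solving the linear first-order ODE for $\tilde l_1$ that forces $\tilde\vecx_1'(t)\parallel \vect_1(t)$. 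This ODE is solvable locally because $\vecr'(t)\neq 0$ and the rulings are transversal to the crease. Ruling angles are preserved, so $\theta_{1R}$ and $\theta_{1L}$ are unchanged. The right patch carries over with the same rulings.

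\emph{Transfer of compatible creases.} By the Combescure lemma and its corollary, a crease-rule pattern has a rigid-ruling folding motion if and only if its transform does. The key point is that the correspondence is a bijection on the relevant data. A curve $\vecx_2(t)$ on $\vecs_{1,L}$ together with rulings $\theta_{2L}$ determines a curve $\tilde\vecx_2(t)$ on the cone with parallel tangents, hence the same ruling angles. This is again obtained by solving the linear ODE of Section~\ref{sec:comb_prelim}, with one initial value. Conversely, a curve on the cone lifts back to the tangent developable by solving the same kind of ODE.

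\emph{Parameter count.} By the conical corollary, the compatible pairs $(\tilde l_2,\theta_{2L})$ on the cone form a three-parameter family, determined by the initial values of $\tilde l_2,\tilde l_2',\tilde l_2''$ in the third-order ODE. Under the correspondence, each of these lifts to a curve on $\vecs_{1,L}$ with the same $\theta_{2L}$ that is rigid-ruling compatible. We then need to check that nothing is gained or lost in the transfer. Any compatible curve on $\vecs_{1,L}$ maps to a compatible curve on the cone, so it is already counted and no further curves arise. In the other direction, the extra initial constant in the lifting ODE is fixed by requiring the lifted curve to correspond to a given point, so the family stays three-dimensional.

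\emph{Main obstacle.} The delicate step is this bookkeeping of free constants. The scaling freedom $p_0$ and the initial lengths in the Combescure construction must be matched so that the family of curves parallel to a given curve is not double counted. My plan is to fix the transform once (fixed $\vecv$ and fixed $\tilde l_1(0)$), so that the map from curves on $\vecs_{1,L}$ to curves on $\tilde\vecs_{1,L}$ is well defined. Along with this, I would note the genericity hypotheses ("in general") needed for the ODE solutions to exist locally.
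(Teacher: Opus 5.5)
Your proposal follows the paper's argument: you Combescure-transform the central tangent developable into a cone (Section~\ref{sec:parcurves}), invoke the conical three-parameter count, transfer the compatible curves back, and note that no extra curves can arise because each compatible curve on $\vecs_{1,L}$ would correspond to one already found on the cone. One small tightening: fixing $\vecv$ and $\tilde l_1(0)$ alone does not make the map $\vecx_2\mapsto\tilde\vecx_2$ well defined, since the image is only determined up to homothety about $\vecv$; this is harmless because $F_2$ and $I_2'$ depend only on $s_2'k_2$ and the ruling angles, which are shared by all curves with the same tangent directions, so the family consists of two parameters of tangent-direction functions plus one constant that places the curve on the patch.
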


\subsection{Rigid-ruling folding combinations of two  planar or constant fold-angle creases}\label{sec:characterization}

In this section, we explore rigid-ruling folding compatibilities of pairs of creases that are either planar or have a constant fold angle. 

The combination of two rigid-ruling foldable planar creases was studied by Mundilova and Nawratil in~\cite{mundilova2024planar} and forms the basis of our work here. They find that rigid-ruling foldable combinations of two planar creases are limited: when two creases are combined along a cylindrical patch, they may be scaled versions of each other. In all other cases, the only rigid-ruling folding compatible creases are the tangent-parallel curves discussed in Section~\ref{sec:parrul}.

In contrast, Lemma~\ref{lem:allconstantfoldangle} implies that constant fold-angle creases are compatible only with other constant fold-angle creases, which, for a given crease, can form a family of compatible creases with up to three parameters, as discussed in Section~\ref{sec:appending}, raising the question of whether this is indeed the case.

In the following section, we extend the work of Mundilova and Nawratil~\cite{mundilova2024planar} and use Corollary~\ref{cor:main} to study the implications of Lemma~\ref{lem:allconstantfoldangle} by showing:
\begin{itemize}
\item For the combination of two constant fold-angle creases, we simplify the differential equations derived in Section~\ref{sec:mainresult} and confirm that, for a given constant fold-angle crease, the second compatible crease has three degrees of freedom in Section~\ref{sec:comboconstant}.
\item When combining a constant fold-angle crease with a planar crease, Lemma~\ref{lem:allconstantfoldangle} implies that the planar crease must also be a constant fold-angle crease. Consequently, it corresponds to a curve that is perpendicular to the rulings of the developable surface. In Section~\ref{sec:comboplanarcons}, we confirm this finding.
\end{itemize}

We focus on cases where the creases are not straight, having only isolated points where $l_i''(t) = 0$ in the case of cylinders, or $f_i(t) = 0$ in the case of cones.

\subsubsection{Combination of two constant fold-angle creases}\label{sec:comboconstant}

Given a crease pattern with two constant fold-angle creases, we note that both $I_1(t) = I_2(t) = 0$. Consequently, one of the two conditions in Corollary~\ref{cor:main}, Equation~\eqref{eqn:condBint}, is trivially satisfied, while the second,  Equation~\eqref{eqn:condAint}, simplifies to
$F_1(t) = c_3 F_2(t)$.

In the following, we further simplify this constraint based on the type of the central patch..

\paragraph{Cylindrical Case.}
Using the expressions in Section~\ref{sec:notation_and_parametrization} for the case of a cylindrical patch, Equation~\eqref{eqn:condAint} simplifies to
$$
\frac{l_1''(t)}{\sqrt{1+l_1'(t)^2}}= c_3 \frac{l_2''(t)}{\sqrt{1+ l_2'(t)^2}}
$$
Using integration, we therefore conclude 
$$
\operatorname{arctanh}\left(\frac{l_1'(t)}{\sqrt{1 + l_1'(t)^2}}\right) = c_3 \operatorname{arctanh}\left(\frac{l_2'(t)}{\sqrt{1 + l_2'(t)^2}}\right)  + c_4,
$$
where $c_4$ is some constant. 

Given one of the curves, say $\vecx_2(t)$ in terms of $l_2(t)$, we can therefore compute a compatible curve from 
$$
l_1(t) = c_5 \pm \int_0^t 
\frac{\operatorname{tanh}(h(\tilde t))}{\sqrt{1 - \operatorname{tanh}^2(h(\tilde t))}}d\tilde t, 
\quad \text{ where }\quad 
h(t) = c_3 \operatorname{arctanh}\left(\frac{l_2'(t)}{\sqrt{1+ l_2'(t)^2}}\right) + c_4.
$$

In conclusion, note that for a given curve $\vecx_2(t)$, in general, we find a three-parameter family of compatible curves corresponding to the constants $c_3$, $c_4$, and $c_5$.

\paragraph{Conical case.}
On the other hand, using the expression in Section~\ref{sec:notation_and_parametrization} for the case of a central conical patch, Equation~\ref{eqn:condAint} simplifies to 
$$
 \frac{f_1(t)}{l_1(t)\sqrt{e_1(t)}}  = c_3 \frac{f_2(t)}{l_2(t) \sqrt{e_2(t)}},
$$
or equivalently, 
$$
  l_1''(t) = l_1(t) + 2 \frac{l_1'(t)^2}{l_1(t)} - c_3 \frac{\sqrt{l_1(t)^2 + l_1'(t)^2}}{l_2(t) \sqrt{l_2(t)^2 + l_2'(t)^2}}\left(l_2(t)^2 + 2 l_2'(t)^2 - l_2(t) l_2''(t)\right).
$$
While we were not able to find a closed-form solution, we expect to be able find a three-parameter family of compatible curves $l_2(t)$, where two parameters correspond to the initial values of $l_2(t)$ and $l_2'(t)$ and $c_3$ is another parameter.

\subsubsection{Combination of a planar and a constant fold-angle crease}\label{sec:comboplanarcons}

Finally, we turn our attention to crease patterns consisting of a planar crease $\vecx_1(t)$ and a crease with constant fold angle $\vecx_2(t)$. We examine how the two conditions in Corollary~\ref{cor:main} simplify in this case. Note that, since $\vecx_2(t)$ has a constant fold angle, $I_2(t) = 0$.

\paragraph{Cylindrical case.}
 Using the corresponding expressions in Section~\ref{sec:notation_and_parametrization} for a central cylindrical patch, Equation~\eqref{eqn:condAint} and Equation~\eqref{eqn:condBint} simplify to 
\begin{align*}
l_1''(t) = c_3 \frac{l_2''(t)}{\sqrt{1 + l_2'(t)^2}} && \text{ and } && l_1'(t)l_1''(t) = 0. 
\end{align*}
The second condition implies that $l_1''(t)=0$, resulting in the planar crease being straight. Consequently, there are no non-trivial combinations in this case.

\paragraph{Conical case.} Using the corresponding expressions in Section~\ref{sec:notation_and_parametrization} for a central conical patch, Equation~\eqref{eqn:condAint} and Equation~\eqref{eqn:condBint} simplify to 
\begin{align*}
\frac{f_1(t)}{l_1(t)^3} = c_3 \frac{f_2(t)}{l_2(t)\sqrt{e_2(t)}}
&&
\text{and}
&&
l_1'(t) f_1(t) = 0.
\end{align*}

Assuming that $\vecx_1(t)$ is not a straight line, that is $f_1(t)\neq 0$ except for isolated values, we conclude that $l_1'(t) = 0$ and hence $l_1(t) = \const$. Consequently, the planar crease $\vecx_1(t)$ is a curve perpendicular to the rulings of the cone, and therefore also a constant fold-angle crease.

Any folded configuration of a cone along a planar crease perpendicular to its rulings results in a right circular cone. Curves of constant fold angle on right circular cones have been studied in connection with pseudogeodesic curves, that is, curves on surfaces whose osculating planes maintain a constant angle with the incident tangent planes, by Walter Wunderlich~\cite[IV]{wunderlich1949}, where they are described via a polarization of slope lines on rotational quadrics. Parametrizations of the corresponding four types of curves in their developed form are given in \cite[VIII]{wunderlich1949} or \cite[Section 3.2]{mundilova2017}.

\paragraph{Tangent developables.} Extending the result from the conical case to tangent developables preserves the property that planar curves are those with constant fold angle, that is, perpendicular to the rulings, whereas the second type corresponds to curves parallel to pseudogeodesics on rotational cones.

\section{Conclusion and Future Work}

In conclusion, this paper studies crease-rule patterns that have a rigid-ruling motion, resulting in new findings on the conjugate-net preserving isometries of globally developable conjugate nets. 

We derived two conditions for rigid-ruling foldability and applied them to two scenarios. First, we showed that introducing a crease into a rigid-ruling foldable pattern generally introduces three degrees of freedom. Second, we investigated combinations of planar and constant fold-angle creases. In particular, we demonstrated that constant fold-angle creases are only compatible with other constant fold-angle creases. Moreover, among those that are both constant fold-angle and planar, compatibility requires that they be perpendicular to the rulings of the patch. This yields a complete characterization of compatible planar and constant fold-angle creases.

In future work, we aim to move beyond the globally developable setting and characterize all rigid-ruling–compatible combinations of developable patches. We also seek to identify and describe parallels between the smooth and discrete cases, ultimately building a bridge between smooth and discrete differential geometry and deepening our understanding of their similarities and differences.

\paragraph{Acknowledgments.} The author would like to thank Georg Nawratil for the collaboration on the compatibility of planar creases, which forms the basis of this work. She is also grateful to Tomohiro Tachi for hosting her at TachiLab during the summer of 2024, when she began working on this problem, and the Geometric Computing Laboratory at EPFL, particularly Mark Pauly. 

This research was supported by the Swiss Government Excellence Scholarship and the Swiss National Science Foundation (SNSF), grant number 200021-231293.

\bibliographystyle{plain}
\bibliography{references}

\end{document}